\newtheorem{theorem}{Theorem}[section]
\newtheorem*{theorem*}{Theorem}
\newtheorem{corollary}[theorem]{Corollary}
\newtheorem{proposition}[theorem]{Proposition}
\newtheorem{definition}[theorem]{Definition}
\newtheorem{problem}{Problem}
\newtheorem{claim}{Claim}[theorem]
\newtheorem{example}[theorem]{Example}
\newtheorem{remark}[theorem]{Remark}
\newenvironment{customthm}[1]
  {\innercustomthm}
  {\endinnercustomthm}
\newcommand{\R}{\mathbb{R}}
\newcommand{\C}{\mathbb{C}}
\begin{document}

\title[Log-Lipschitz and H\"older regularity imply smoothness]
{Log-Lipschitz and H\"older regularity imply smoothness for complex analytic sets}

\author[J. E. Sampaio]{Jos\'e Edson Sampaio}

\address{Jos\'e Edson Sampaio: Departamento de Matem\'atica, Universidade Federal do Cear\'a,
	      Rua Campus do Pici, s/n, Bloco 914, Pici, 60440-900, 
	      Fortaleza-CE, Brazil. E-mail: {\tt edsonsampaio@mat.ufc.br} 	   
}

\thanks{The author was partially supported by CNPq-Brazil grant 310438/2021-7. This work was supported by the Serrapilheira Institute (grant number Serra -- R-2110-39576).
}

\keywords{Log-Lipschitz regularity, Lipschitz regularity, H\"older regularity, Smoothness of analytic sets, Theorem of Mumford}
\subjclass[2010]{14B05; 32S50 }

\begin{abstract}
In this paper, we prove metric analogues, in any dimension and in any co-dimension, of the famous Theorem of Mumford on smoothness of normal surfaces and the beautiful Theorem of Ramanujam that gives a topological characterization of $\mathbb{C}^2$ as an algebraic surface. For instance, we prove that a complex analytic set that is log-Lipschitz regular at 0 (i.e., a complex analytic set that has a neighbourhood of the origin which bi-log-Lipschitz homeomorphic to an Euclidean ball) must be smooth at 0. We prove even more, we prove that if a complex analytic set $X$ such that, for each $0<\alpha<1$, $(X,0)$ and $(\mathbb{R}^k,0)$ are bi-$\alpha$-H\"older homeomorphic, then $X$ must be smooth at 0. These results generalize the Lipschitz Regularity Theorem, which says that a Lipschitz regular complex analytic set must be smooth. Global versions of these results are also presented here and, in particular, we obtain a characterization of an affine linear subspace as a pure-dimensional entire complex analytic set.
\end{abstract}

\maketitle
\tableofcontents

\section{Introduction}

In 1961, D. Mumford in his famous article \cite{Mumford:1961} proved that {\it a complex algebraic surface, with normal singularity at $x_0$, is smooth at $x_0$ if and only if its link at $x_0$ is simply connected}. Since in the case of a complex normal surface $X$, its link at $x_0\in X$ is simply connected if and only if $X$ is topologically regular at $x_0$ (i.e., there is an open neighbourhood $U$ of $x_0$ such that $X\cap U$ is homeomorphic to some open euclidean ball), we can state the Theorem of Mumford as the following:
\begin{theorem}[Theorem of Mumford]
Let $X\subset \C^n$ be a complex algebraic set with a normal singularity at $x_0$. Then $X$ is smooth at $x_0$ if and only if $X$ is topologically regular at $x_0$. 
\end{theorem}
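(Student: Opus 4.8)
The plan is to follow Mumford's original strategy, since this is precisely his theorem, and to record at the outset that the statement is genuinely two-dimensional: I read it with $\dim_{\C}X = 2$ at $x_0$, because in higher dimension a normal isolated singularity can have a simply connected (even spherically homeomorphic) link without being smooth, so the literal statement for an arbitrary complex algebraic set needs the surface hypothesis. The forward implication is immediate: if $X$ is smooth at $x_0$, then a neighbourhood of $x_0$ in $X$ is a complex $2$-manifold, hence homeomorphic to a Euclidean ball, so $X$ is topologically regular at $x_0$. All the content lies in the converse, and by the equivalence recalled just above the statement I may replace ``topologically regular at $x_0$'' by ``the link $M$ of $(X,x_0)$ is simply connected.''

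First I would fix the minimal good resolution $\pi\colon\widetilde{X}\to X$ of the normal surface germ $(X,x_0)$, with reduced exceptional divisor $E=\pi^{-1}(x_0)=\bigcup_{i=1}^{s}E_i$, where the $E_i$ are smooth compact curves of genus $g_i$ meeting transversally without triple points. The link $M$ is then homeomorphic to the boundary of a regular (plumbing) neighbourhood of $E$ in $\widetilde{X}$; topologically $M$ is the plumbed $3$-manifold attached to the weighted dual graph $\Gamma$, whose vertices carry the weights $(g_i, E_i\cdot E_i)$ and whose edges record the intersections $E_i\cdot E_j$. Two structural facts drive everything. By the Grauert contractibility criterion the intersection matrix $I=(E_i\cdot E_j)_{i,j}$ is negative definite; and the first homology of a plumbed manifold is computed by its intersection form, $H_1(M;\mathbb{Z})\cong\operatorname{coker}\bigl(I\colon\mathbb{Z}^s\to\mathbb{Z}^s\bigr)$.

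Since $M$ is simply connected we get $H_1(M)=0$, so $I$ is unimodular, and being negative definite its determinant equals $(-1)^s$. This alone is far from smoothness, so next I would invoke Mumford's explicit presentation of $\pi_1(M)$ read off from $\Gamma$: the generators are the meridians around each $E_i$ together with the $2g_i$ loop generators of each $\pi_1(E_i)$, and the relations encode both the surface relation on every vertex and the gluing relations weighted by the entries of $I$. Imposing $\pi_1(M)=1$ and exploiting negative definiteness, one forces every $g_i$ to vanish (all components are rational) and shows that $\Gamma$ cannot support a nontrivial negative-definite unimodular configuration without containing a $(-1)$-curve; minimality of the resolution excludes $(-1)$-curves, leaving only $s=0$. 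Hence $E=\varnothing$, the map $\pi$ is an isomorphism over $x_0$, and $X$ is smooth at $x_0$.

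The main obstacle is exactly this last, purely algebraic step, which is the technical heart of Mumford's paper: deducing from the triviality of the presented group that the weighted graph $\Gamma$ is trivial. One cannot shortcut it by declaring a simply connected closed $3$-manifold to be $S^3$ — the Poincar\'e conjecture was unavailable to Mumford and is not what is needed anyway; the argument must run entirely through the combinatorics of $\Gamma$ and the negative-definite form $I$. The remaining ingredients (existence of the minimal good resolution, Grauert's criterion, the plumbing description of the link, and the homology computation) are standard, and I would simply cite them rather than reprove them.
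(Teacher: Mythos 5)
The paper does not actually prove this statement: it is quoted as a classical result with a citation to Mumford's 1961 article, so there is no in-paper argument to compare against. Your outline is, as you intend, Mumford's own strategy, and as a roadmap it is sound; you are also right that the statement as printed omits the surface hypothesis, which is essential (the paper itself recalls Brieskorn's higher-dimensional counterexamples immediately afterwards). Two caveats. First, the clause asserting that ``$\Gamma$ cannot support a nontrivial negative-definite unimodular configuration without containing a $(-1)$-curve'' is false if read on its own: the $E_8$ tree of rational $(-2)$-curves is negative definite, unimodular, free of $(-1)$-curves, and is the minimal resolution graph of a genuine normal surface singularity, whose link is the Poincar\'e homology sphere. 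What is true --- and what constitutes the entire technical content of Mumford's paper --- is that for such a minimal nonempty configuration the \emph{fundamental group} of the plumbed link is nontrivial; unimodularity of $I$, i.e.\ $H_1(M)=0$, is nowhere near sufficient, and this example is precisely why the hypothesis must be $\pi_1(M)=1$. Relatedly, the isomorphism $H_1(M;\mathbb{Z})\cong\operatorname{coker}(I)$ holds only when all $g_i=0$ and $\Gamma$ is a tree; the general formula has extra free summands coming from the genera and from cycles of $\Gamma$, and it is from those summands (not from the $\pi_1$ presentation) that $H_1(M)=0$ already forces $g_i=0$ and $\Gamma$ a tree. Second, the group-theoretic induction on the tree showing that the presented group cannot be trivial unless $\Gamma$ is empty is exactly the step you leave as a citation, so your text is an accurate sketch rather than a proof; given that the paper itself only cites the result, that level of detail is acceptable, but the offending sentence should be rephrased so it does not assert a false combinatorial statement.
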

Unfortunately, this result does not hold true in dimensions greater than 2, as it was proved by F. Brieskorn in \cite{Brieskorn:1966} (see also \cite{Brieskorn:1966b}). For instance, he proved that the hypersurface $X=\{(x,y,z,w)\in \C^4;x^2+y^2+z^2=w^3\}$, which has a normal singularity at $0$, is topologically regular at $0$, but is not smooth at $0$. Thus, in order to obtain a result like the Theorem of Mumford in higher dimension, we have to impose more conditions. As another reference of works in the same line, we also have D. Prill \cite{Prill:1967} where it was proved that {\it any d-dimensional complex algebraic cone with trivial i-th homotopy group of the link, for all $0\leq i \leq 2d-2$, must be a linear affine subspace}. One more result of the type of  Theorem of Mumford was obtained separately by N. A'Campo in \cite{Acampo:1973} and L\^e D. T. in \cite{Le:1973}, which can be stated as follows:
{\it
Let $(X,x_0)$ be a codimension one germ of complex analytic subset of $\mathbb{C}^n$. If there is a homeomorphism $\varphi\colon(\mathbb{C}^n,X,x_0)\to (\mathbb{C}^n,\mathbb{C}^{n-1}\times\{0\},x_0)$, then $X$ is smooth at $x_0$.
}

However, the Theorem of A'campo-L\^e does not hold, in general, for complex analytic sets that are topological submanifolds in codimension greater than 1. For instance, the cusp $C=\{(x,y,z)\in \C^3;x^3=y^2$ and $z=0\}$ is a topological submanifold of $\C^3$, but is not smooth at $0$.

Recently, it was presented two different approaches for a type of Theorem of Mumford in higher dimension: one from the contact geometry point of view and the other one from the Lipschitz geometry point of view. 

From the contact geometry point of view, M. McLean in \cite{McLean:2016} showed that: {\it if a complex analytic set $A$ has a normal isolated singularity at $0$ and its link at $0$ (with its canonical contact structure) is contactomorphic to the link of $\mathbb{C}^3$ (the standard contact sphere), then $A$ is smooth at $0$}.  

From the Lipschitz geometry point of view,  the author in his thesis \cite{Sampaio:2015} (see also \cite{Sampaio:2016} and \cite{BirbrairFLS:2016}) proved the following:
\begin{theorem}[Lipschitz Regularity Theorem]
A complex analytic set $X$ is smooth at $x_0$ if and only if $X$ is, around $x_0$, bi-Lipschitz homeomorphic to an open Euclidean ball.
\end{theorem}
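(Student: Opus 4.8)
The plan is to prove the nontrivial implication, that bi-Lipschitz regularity forces smoothness, since the converse is immediate: if $X$ is smooth at $x_0$, then a local complex-analytic chart realizes a neighbourhood of $x_0$ in $X$ as an open subset of $\C^d$ via a biholomorphism, which restricts to a bi-Lipschitz homeomorphism onto a small ball. So assume $(X,x_0)$ is bi-Lipschitz homeomorphic to an open ball, and write $d=\dim_{\C}(X,x_0)$. The real dimension of $X$ is $2d$, and since bi-Lipschitz maps preserve Hausdorff dimension the ball lives in $\R^{2d}$; in particular $X$ is a topological $2d$-manifold near $x_0$. Normalize $x_0=0$.

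First I would pass to the tangent cone. The key structural input, which I would establish as a lemma, is that the tangent cone is a bi-Lipschitz invariant of a germ: an outer bi-Lipschitz homeomorphism $(X,0)\to(Y,0)$ induces, via a rescaling limit $x\mapsto\lim f(\lambda x)/\lambda$ of the map along a blow-up, an outer bi-Lipschitz homeomorphism of the tangent cones $C(X,0)\to C(Y,0)$. Applying this to the given homeomorphism with an open ball, whose tangent cone at the centre is all of $\R^{2d}$, I obtain that $C(X,0)$ is bi-Lipschitz homeomorphic to $\R^{2d}$. Consequently $C(X,0)$ is a topological manifold homeomorphic to $\R^{2d}$, and its link $C(X,0)\cap S^{2n-1}_\varepsilon$ is homeomorphic to the sphere $S^{2d-1}$.

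Next I would invoke Prill's theorem. The tangent cone $C(X,0)$ is a $d$-dimensional complex algebraic cone, and since its link is homeomorphic to $S^{2d-1}$ its homotopy groups $\pi_i$ vanish for all $0\le i\le 2d-2$. By Prill's theorem the cone $C(X,0)$ is therefore a complex linear subspace $L\cong\C^d$ of $\C^n$.

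The final and hardest step is to upgrade ``linear tangent cone'' to ``smooth'', that is, to show the multiplicity satisfies $m(X,0)=1$. This cannot follow from topology alone: the cusp $\{x^3=y^2\}\subset\C^2$ is homeomorphic to $\C$ and has the linear tangent cone $\{y=0\}$, yet is singular at $0$, its tangent cone carrying multiplicity $2$. Thus the bi-Lipschitz hypothesis must be used in an essential, metric way at precisely this point. After choosing coordinates so that $L=\C^d\times\{0\}$, the linear projection $\pi\colon\C^n\to L$ restricts to a finite analytic branched covering $\pi|_X\colon(X,0)\to(\C^d,0)$ whose degree equals $m(X,0)=\theta(X,0)$, the local density. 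I would then argue that any branching ($m\ge 2$) forces distinct sheets of $X$ over a generic complex line to approach one another sublinearly near $0$, contradicting the lower bi-Lipschitz bound inherited from the homeomorphism with a ball; this is exactly what excludes the cusp. Hence $m(X,0)=1$, and a complex analytic germ of multiplicity one is smooth, completing the proof. I expect this multiplicity-one step to be the main obstacle, since it is the place where the metric condition must accomplish what pure topology provably cannot.
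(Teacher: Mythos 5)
Your architecture is the one the paper itself attributes to the original proof of this theorem: bi-Lipschitz invariance of tangent cones reduces everything to the tangent cone, Prill's theorem forces the cone to be linear, and a metric argument kills the multiplicity of the branched projection onto the cone. (The paper's Theorem~\ref{main_result} must replace the first step by a moderately discontinuous homology computation and a cohomological variant of Prill's theorem, because H\"older maps do not preserve tangent cones; in the purely Lipschitz case your shortcut through the homeomorphism of links and the original Prill theorem is legitimate.) A minor caveat on the first step: the rescaled maps $x\mapsto f(\lambda x)/\lambda$ need not converge pointwise, so the tangent-cone invariance should be cited or proved via a subsequence argument rather than asserted as a limit.

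The genuine gap is in your final step, which you correctly identify as the crux but then justify with an inference that does not work. The two sheets $\gamma_1(t),\gamma_2(t)$ over a generic ray do satisfy $\|\gamma_1(t)-\gamma_2(t)\|=o(t)$ (Claim~\ref{b_bigger_one}: both arcs have the same tangent vector in the now-linear cone), but this does \emph{not} contradict the lower bi-Lipschitz bound $\|h(x)-h(y)\|\geq C^{-1}\|x-y\|$: that inequality is perfectly compatible with two points of $X$ being arbitrarily close, and no two-sided bound on outer distances is violated by sheets approaching each other fast. The actual contradiction needs two further inputs. First, a lower bound on the \emph{inner} distance: any path in $X$ joining $\gamma_1(t)$ to $\gamma_2(t)$ projects to a loop in $\Delta'\setminus\Sigma$ based at $tv$ which cannot stay inside a thin cone around the ray (else it would lift trivially in the covering), so its diameter, and hence the length of the original path, is at least $c\,t$; this gives $d_{X,inn}(\gamma_1(t),\gamma_2(t))\gtrsim t$ (Claim~\ref{a_one}). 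Second, the fact that Lipschitz regularity forces $X$ to be LNE at $0$ (Definition~\ref{def:lne}): the ball is LNE and a bi-Lipschitz homeomorphism distorts inner distances by at most a constant, so $d_{X,inn}(\gamma_1(t),\gamma_2(t))\leq C'\|\gamma_1(t)-\gamma_2(t)\|=o(t)$. Only the juxtaposition $c\,t\leq o(t)$ is a contradiction. As written, your argument never brings the inner metric into play, and so it does not actually exclude multiplicity $\geq 2$; supplying the monodromy lower bound and the LNE step closes the gap and recovers exactly the mechanism of Claims~\ref{a_one}--\ref{claim:diameter_inner_equivalence} in the paper.
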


Around a decade ago the author was presented, in a particular conversation, to the following problem:

\begin{problem}\label{conjecture_one}
Let $X\subset\mathbb{C}^n$ be a complex analytic set of dimension $d$. If $X$ is log-Lipschitz regular at $0$, then is $X$ smooth at $0$?
\end{problem}

In order to know, $h$ is {\bf log-Lipschitz} if there exists a positive constant $C$ such that $\|h(x)-h(y)\|\leq C\|x-y\|\big|\log \|x-y\|\big|$ for all $ x, y\in X$ such that $0<\|x-y\|<1/2$. Thus, we say that $X$ is log-Lipschitz regular at $0$ if there are an open neighbourhood $U$ of $0$ and a homeomorphism $h\colon X\cap U\to B_r^k(0)$ such that $h(0)=0$ and $h$ and $h^{-1}$ are log-Lipschitz. Similarly, we define the notions of $\alpha$-H\"older and Lipschitz regularities at $0$ (see the definitions of the different regularities in Subsection \ref{subsec:reg_def}).

In this article, we answer affirmatively Problem \ref{conjecture_one}. Indeed, we obtain the following result of smoothness of analytic sets, which gives even more than a positive answer to Problem \ref{conjecture_one}.
\begin{customthm}{\ref*{main_result}}
Let $X\subset\mathbb{C}^n$ be a complex analytic set. Then the following statements are equivalent:
\begin{itemize}
 \item [(1)] $X$ is $\alpha$-H\"older regular at $0$ for all $\alpha\in (0,1)$;
 \item [(2)] $X$ is strongly H\"older regular at $0$;
 \item [(3)] $X$ is log-Lipschitz regular at $0$;
 \item [(4)] $X$ is Lipschitz regular at $0$;
 \item [(5)] $X$ is smooth at $0$.
\end{itemize}
\end{customthm}

Note that in the very particular case when $\dim X=1$, Theorem \ref{main_result} is a consequence of the main result in \cite{FernandesSS:2018} (see some related studies in \cite{FernandesFSS:2024} and \cite{FernandesSS:2024}).

Theorem \ref{main_result} is sharp in the following sense: for any $\alpha\in (0,1)$ and any positive integer $d$, there is a complex algebraic hypersurface $X\subset \C^{d+1}$ that is $\alpha$-H\"older regular at $0$, but is not smooth at $0$ (see Proposition \ref{prop:example_sharp}).

The following example shows that one cannot obtain a result like Theorem \ref{main_result} for sets in general.
\begin{example}\label{example:log-regular}
Let $f\colon \R\to \R$ be the function given by
$$
f(x)=\left\{\begin{array}{ll}
            |x||\log |x||,& \mbox{ if }x\not=0,\\
            0,& \mbox{ if }x=0.
            \end{array}\right.
$$
Then $X=Graph(f)$ is log-Lipschitz regular at $0$, but is not Lipschitz regular at $0$. Indeed, the restriction of the projection on the first coordinate $\pi\colon X\to \R$ is a bi-log-Lipschitz homeomorphism.
\end{example}

It is clear that Theorem \ref{main_result} is a generalization of the Lipschitz Regularity Theorem. However, Example \ref{example:log-regular} also shows that the proof of the Lipschitz Regularity Theorem cannot be used to prove Theorem \ref{main_result}.
Indeed, the main ingredients in the proof of the Lipschitz Regularity Theorem were the following: 
\begin{itemize}
 \item [(1)] The bi-Lipschitz invariance of the tangent cones in \cite[Teorema 4.1.1]{Sampaio:2015} (see also  \cite[Theorem 3.2]{Sampaio:2016}, \cite[Theorem 3.6]{KoikeP:2019} and \cite[Theorem 3.1]{SampaioS:2022}), which says that if two germs of definable sets in an o-minimal structure on $\R$ are bi-Lipschitz homeomorphic, then their tangent cones are bi-Lipschitz homeomorphic too (see the definition of tangent cone in Definition \ref{def:tg_cone}); 
 \item [(2)] Lipschitz regularity implies Lipschitz normally embeddedness, i.e., a set that is Lipschitz regular at $0$ must be Lipschitz normally embedded (or quasi-convex) at $0$ (see Definition \ref{def:lne}).
\end{itemize}
However, there are germs of sets $(A,a)$ and $(B,b)$ that are bi-log-Lipschitz homeomorphic, but their tangent cones are not homeomorphic. For instance, in Example \ref{example:log-regular}, $C(X,0)=\{(0,y)\in \R^2; y\geq 0\}$ and $C(\R,p)=\R$, for any $p\in\R$. Moreover, there are sets that are log-Lipschitz regular at $0$, but are not Lipschitz normally embedded at $0$. For instance, the set $X$ in Example \ref{example:log-regular} is log-Lipschitz regular at $0$, but is not Lipschitz normally embedded at $0$.

In order to know more about the o-minimal geometry, see, for instance, \cite{Coste:1999} and \cite{Dries:1998}. 

Still in this article, we also present a global version of Theorem \ref{main_result}, which can be seen as a Lipschitz analogous of the following beautiful topological characterization of $\C^2$ as an algebraic surface, presented by Ramanujam in \cite[Theorem]{Ramanujam:1971}.
\begin{theorem}[Theorem of Ramanujam]
Let $X$ be an affine smooth complex algebraic surface that is contractible and simply connected at infinity. Then $X$ is isomorphic to $\C^2$ as an algebraic variety. 
\end{theorem}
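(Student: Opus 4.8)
The statement is the classical theorem of Ramanujam, so the plan is to reproduce the algebraic-geometric strategy of compactifying $X$ and analyzing its behaviour at infinity. First I would fix a smooth projective compactification: by resolving singularities of an arbitrary projective closure, one obtains a smooth projective surface $\bar X$ with $X$ a Zariski-open subset and boundary $D:=\bar X\setminus X$ a reduced divisor with simple normal crossings, all of whose components are smooth curves meeting transversally. The two topological hypotheses are then recorded as follows: contractibility forces $H^i(X;\mathbb{Q})=0$ for $i>0$ (so in particular $b_1(X)=b_2(X)=0$), while simple connectivity at infinity says that the boundary $3$-manifold $M$---the link of $D$, i.e.\ the boundary of a regular tubular neighbourhood of $D$ in $\bar X$---satisfies $\pi_1(M)=1$.

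Second, I would prove that $\bar X$ is a rational surface. A Mayer--Vietoris argument comparing the (co)homology of $\bar X$, of $X$, and of the tubular neighbourhood of $D$ shows that the vanishing of $b_1(X)$ propagates to $q(\bar X)=h^1(\bar X,\mathcal{O}_{\bar X})=0$, and contractibility likewise forces the second plurigenus $P_2(\bar X)$ to vanish; Castelnuovo's rationality criterion $q=P_2=0$ then gives that $\bar X$ is rational. In particular every component of $D$ is a smooth rational curve and the dual graph $\Gamma$ of $D$ is a tree.

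Third---and this is where the hypothesis at infinity carries the weight---I would analyze $\Gamma$ together with the self-intersection numbers $D_i^2$ through Mumford's plumbing presentation of $M$, which computes $\pi_1(M)$ from $\Gamma$. The requirement $\pi_1(M)=1$, combined with rationality and with the numerical constraints imposed by $b_2(X)=0$ on the intersection form of $D$, forces $\Gamma$ (already a tree of rational curves) to admit a sequence of elementary simplifications: blowing down $(-1)$-curves lying in $D$ and performing the induced elementary transformations reduces the configuration until $\bar X$ becomes a minimal rational surface and $D$ a single curve. The main obstacle is precisely this classification step: one must show that the only outcome compatible with trivial $\pi_1$ at infinity is $\bar X\cong\mathbb{P}^2$ with $D$ a line $L$ of self-intersection $1$, ruling out the Hirzebruch surfaces $\mathbb{F}_n$ and the longer boundary chains. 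Once this is achieved, $X\cong \mathbb{P}^2\setminus L\cong\C^2$ as algebraic varieties, and the theorem follows. I expect the delicate point to be controlling the termination of the contraction process using the interplay between the purely topological triviality of $\pi_1(M)$ and the intersection-theoretic data on $D$, which is the heart of Ramanujam's original argument.
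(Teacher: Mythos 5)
The paper does not prove this statement: it quotes Ramanujam's theorem verbatim from \cite{Ramanujam:1971} as motivation for the global results in Section 5, so there is no in-paper proof to compare yours against. Judged on its own terms, your outline correctly reproduces the architecture of Ramanujam's original argument (SNC compactification $\bar X$ with boundary divisor $D$; translation of the hypotheses into $\widetilde H_*(X;\mathbb{Q})=0$ and $\pi_1(M)=1$ for the link $M$ of $D$; rationality of $\bar X$; reduction of $(\bar X, D)$ to $(\mathbb{P}^2, L)$).

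However, as a proof it has two genuine gaps, both of which you partly acknowledge. First, the rationality step is not as cheap as stated: from contractibility one gets $q(\bar X)=0$ and $p_g(\bar X)=0$ (the boundary classes span $H^2(\bar X;\mathbb{Q})$ and are of type $(1,1)$), but Castelnuovo's criterion needs $P_2(\bar X)=0$, which does not follow from $p_g=q=0$ alone (Enriques and Godeaux surfaces are counterexamples); ruling out the non-rational cases is a substantive argument in Ramanujam's paper, not a consequence you can simply assert. Second, and more seriously, the entire weight of the theorem sits in the step you defer: showing that the weighted dual tree of rational curves, constrained by the unimodularity of the intersection form coming from $b_2(X)=0$ and by the triviality of $\pi_1(M)$ computed via Mumford's plumbing presentation, can be contracted to a single line in $\mathbb{P}^2$ while excluding the Hirzebruch surfaces and longer chains. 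Naming this as ``the main obstacle'' is accurate, but it is precisely the content of the theorem --- without that combinatorial classification the argument establishes nothing beyond the (comparatively easy) setup. So the proposal is a correct roadmap to the classical proof rather than a proof.
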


Nowadays, the Theorem of Ramanujam is equivalent to say that {\it any affine smooth complex algebraic surface $X\subset \C^n$, which is contractible and $X\setminus B_r^{2n}(0)$ is homeomorphic to $\C^2\setminus B_r^4(0)$ for all big enough $r>0$, must be isomorphic to $\C^2$ as an algebraic variety}.

Another result in this line was presented by Ahern and Rudin in \cite{AhernR:1993}.
To be more precise, Ahern and Rudin in \cite{AhernR:1993} defined the notion of a set to be $C^1$-smooth at infinity.

\begin{definition}
A set $V\subset \R^n$ is {\bf $C^1$-smooth at infinity} if $\iota (V\setminus\{0\})\cup \{0\}$ is a $C^1$ submanifold around $0$, where $\iota\colon \R^n\setminus \{0\}\to \R^n\setminus \{0\}$ is the mapping given by $\iota(x)=\frac{x}{\|x\|^2}$.
\end{definition}
With this notion, Ahern and Rudin in \cite{AhernR:1993} proved the following very interesting result:
\begin{theorem}[Thereom II in \cite{AhernR:1993}]\label{thm:Ahern-Rudin}
Let $V\subset \mathbb{C}^n$ be a complex analytic set. Then $V$ is $C^1$ smooth at infinity if and only if $V$ is the union of an affine linear subspace of $\mathbb{C}^n$ and a (possibly empty) finite set.
\end{theorem}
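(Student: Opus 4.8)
The plan is to treat the two implications separately, the forward one being the substantial direction. Throughout set $W:=\iota(V\setminus\{0\})\cup\{0\}$ and recall that $\iota(x)=x/\|x\|^2$ is an involutive conformal diffeomorphism of $\R^{2n}\setminus\{0\}$ which preserves the rays from the origin, so that $\iota$ exchanges a punctured neighbourhood of $0$ in $W$ with the part of $V$ lying outside a large ball. For the easy implication, assuming $V=L\cup F$, I would use classical M\"obius geometry: $\iota$ carries an affine linear subspace through the origin onto itself, and an affine linear subspace not through the origin onto a round sphere passing through $0$; in either case $\iota(L\setminus\{0\})\cup\{0\}$ is a smooth submanifold near $0$. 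Since $F$ is finite it is bounded, so $\iota(F)$ is a finite set of points all different from $0$, which misses a small enough neighbourhood of $0$. Hence $W$ is a $C^1$ (in fact real-analytic) submanifold near $0$.

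For the converse, assume $W$ is a $C^1$ submanifold of real dimension $2d$ near $0$, where $d=\dim_{\C}V$, and let $E:=T_0W$ be its tangent space; since $W$ is $C^1$, its tangent cone at $0$ equals $E$. Because $\iota$ preserves directions, the set of limiting directions of $W$ at $0$ coincides with the set of limiting directions of $V$ at infinity, so the tangent cone at infinity $C_\infty(V)$ equals $E$. I would then invoke the classical fact that the tangent cone at infinity of a complex analytic set is a complex analytic cone (realized as the limit of the rescalings $tV$ as $t\to 0$); being simultaneously a linear space, $E$ must be a complex linear subspace of $\C^n$ of complex dimension $d$. Moreover, after shrinking the neighbourhood, $W\setminus\{0\}$ is connected, being a punctured $C^1$ manifold of dimension $2d\ge 2$, so $V$ has a single end.

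The heart of the argument is to promote the $C^1$-regularity of $W$ at $0$ to a multiplicity-one statement for $V$ at infinity. Writing $W$ near $0$ as a graph $\{(u,\phi(u)):u\in E\}$ over $E$ with $\phi(0)=0$ and $D\phi(0)=0$, the orthogonal projection $\pi\colon\C^n\to E$ restricts to a $C^1$ diffeomorphism near $0$, so its local degree along $W$ at $0$ is one; transporting this through $\iota$, the single smooth sheet of $W$ at $0$ forces $V$ to be a single multiplicity-one sheet over $E$ at infinity, ruling out branched behaviour such as the parabola $\{(z,z^2)\}$, whose inverted image consists of two sheets tangent at $0$ and is not $C^1$. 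This is the step I expect to be the main obstacle: a cusp-type germ is a topological manifold yet has multiplicity greater than one, so one genuinely needs the $C^1$ hypothesis, and not merely topological regularity, to exclude branching, and one must check that the non-holomorphic map $\iota$ nevertheless transfers the local degree faithfully. Granting this, for $\|z\|$ large $V$ is the graph of a holomorphic map $\psi\colon\{z\in E:\|z\|>R\}\to E^{\perp}$, and the relations $z=u/\|w\|^2$ and $\psi(z)=\phi(u)/\|w\|^2$ (where $w=(u,\phi(u))$) give $\|\psi(z)\|/\|z\|=\|\phi(u)\|/\|u\|\to 0$, i.e. $\psi(z)=o(\|z\|)$.

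It then remains a Liouville-type argument. If $d\ge 2$, Hartogs' theorem extends $\psi$ across the ball to an entire map on $E\cong\C^d$, and the sublinear bound $\psi(z)=o(\|z\|)$ forces $\psi$ to be constant by the Cauchy estimates. If $d=1$, the single-end property makes $\pi|_V\colon V\to E$ proper of degree one, so $\psi$ is in fact entire on $\C$, and again $o(\|z\|)$ growth makes it constant via its Laurent expansion at infinity. In every case the unique unbounded irreducible component of $V$ coincides with the affine translate $L=\{(z,c):z\in E\}$ of $E$. Consequently $V\setminus L$ is a bounded complex analytic subset of $\C^n$, hence compact, hence a finite set $F$ since $\C^n$ contains no positive-dimensional compact analytic subsets. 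Therefore $V=L\cup F$, as claimed.
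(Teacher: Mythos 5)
A preliminary remark: the paper does not prove this statement at all --- it is quoted as Theorem II of Ahern and Rudin \cite{AhernR:1993} --- and the paper's own results in this direction (Theorem \ref{thm:gen_Ahern-Rudin}, Corollary \ref{cor:AhernR_Lip}) are obtained by a completely different route (tangent cone at infinity, a generalized Prill theorem via the Gysin sequence and moderately discontinuous homology, and an order-of-contact comparison of outer and inner distances). So your proposal stands or falls on its own as a direct, Ahern--Rudin-style argument, and as written it has a genuine gap exactly at the step you yourself flag as the heart of the matter: the passage from ``$W$ is a single $C^1$ sheet at $0$'' to ``$V$ is a single multiplicity-one sheet over $E$ at infinity'' is asserted and then ``granted''. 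It is not a formal degree transport: $\iota$ does not commute with the orthogonal projection $\pi$ onto $E$, so two points $v_1=(u,\psi_1)$, $v_2=(u,\psi_2)$ of $V$ lying over the \emph{same} $u$ are sent to points of $W$ with \emph{different} $E$-projections $u/\|v_1\|^2\neq u/\|v_2\|^2$, and injectivity of $\pi|_W$ near $0$ says nothing about them directly. The cusp $\{z^2=w^3\}$ shows the danger: at infinity it is a degree-$3$ cover of its tangent line, it lies in arbitrarily narrow cones around that line, and its inversion is a topological manifold at $0$; only a quantitative use of $C^1$-ness can exclude it. What does work is an estimate: writing $\|v_i\|^2=\|u\|^2+\|\psi_i\|^2$ with $\|\psi_i\|=o(\|u\|)$, one computes that the $E^\perp$-component of $\iota(v_1)-\iota(v_2)$ has norm at least $(1-o(1))\|\psi_1-\psi_2\|/\|v_1\|^2$ while its $E$-component has norm at most $o(1)\cdot\|\psi_1-\psi_2\|/\|v_1\|^2$; since $W$ is a $C^1$ graph over $E$ with $D\phi(0)=0$, the $E^\perp$-component of the difference of any two points of $W$ near $0$ is $o(1)$ times the $E$-component, a contradiction unless $\psi_1=\psi_2$. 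Some such computation must be supplied; without it the proof is incomplete.

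A second, smaller gap: for a merely analytic (non-algebraic) $V$ the ``classical fact'' that $C(V,\infty)$ is a complex cone is not available --- it is a theorem about \emph{algebraic} sets (\cite{LeP:2016}, \cite[Theorem 3.1]{Sampaio:2023}), and indeed the paper's own analytic-case statement, Corollary \ref{cor:AhernR_Lip}, first proves that $V$ is algebraic using \cite[Theorem 5.1]{Sampaio:2023b} before saying anything about $C(V,\infty)$. You can sidestep this: since $D\iota_q=\|q\|^{-2}R_{\hat q}$ with $R_{\hat q}$ the reflection fixing $\hat q^{\perp}$, one has $T_pW=R_{\hat p}\bigl(T_{\iota(p)}V\bigr)$; letting $p\to0$ along a suitable subsequence gives $E=R_{u_0}(T_\infty)$ with $u_0\in E$ and $T_\infty$ a limit of complex $d$-planes, hence itself complex, and $u_0\in E$ forces $R_{u_0}(T_\infty)=T_\infty$, so $E=T_\infty$ is a complex subspace. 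The easy implication, the narrow-cone asymptotics, and the Hartogs/Liouville endgame (including the $d=1$ properness argument) are all fine.
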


Similarly, we define also the following:
\begin{definition}
A set $V\subset \R^n$ is:
\begin{itemize}
 \item {\bf $\alpha$-H\"older smooth at infinity} if $\iota (V\setminus\{0\})\cup \{0\}$ is $\alpha$-H\"older regular at $0$;
 \item {\bf strongly H\"older smooth at infinity} if $\iota (V\setminus\{0\})\cup \{0\}$ is strongly H\"older regular at $0$;
 \item {\bf log-Lipschitz smooth at infinity} if $\iota (V\setminus\{0\})\cup \{0\}$ is log-Lipschitz regular at $0$;
 \item {\bf Lipschitz smooth at infinity} if $\iota (V\setminus\{0\})\cup \{0\}$ is Lipschitz regular at $0$.
\end{itemize}
\end{definition}

Thus, for algebraic sets, the next result generalizes \cite[Thereom II]{AhernR:1993} and \cite[Theorem 3.8]{FernandesS:2020} (see also \cite[Theorem 1.6]{Sampaio:2023}, \cite[Corollary 4.3]{FernandesS:2023}, \cite[Theorem 3.1]{Sampaio:2019} and \cite[Corollary 2.13]{Sampaio:2020}):
\begin{customthm}{\ref*{thm:gen_Ahern-Rudin}}
Let $V\subset \mathbb{C}^n$ be a complex algebraic set. Then the following statements are equivalent:
\begin{itemize}
 \item [(1)] $V$ is $\alpha$-H\"older smooth at infinity for all $\alpha\in (0,1)$;
 \item [(2)] $V$ is strongly H\"older smooth at infinity;
 \item [(3)] $V$ is log-Lipschitz smooth at infinity;
 \item [(4)] $V$ is Lipschitz smooth at infinity;
 \item [(5)] $V$ is the union of an affine linear subspace of $\mathbb{C}^n$ and a (possibly empty) finite set.
\end{itemize}
\end{customthm}

We also present a generalization of  \cite[Thereom II]{AhernR:1993} for complex analytic sets in general. Indeed, we prove that {\it a complex analytic set $V\subset \mathbb{C}^n$ is Lipschitz smooth at infinity if and only if $V$ is the union of an affine linear subspace of $\mathbb{C}^n$ and a (possibly empty) finite set} (see Corollary \ref{cor:AhernR_Lip}). This gives a Lipschitz characterization of $\C^d$ as an affine linear subspace among all the pure-dimensional entire complex analytic sets.

The main ingredients in our proofs are the use of the new metric algebraic topology theory called {\bf moderately discontinuous homology} introduced in \cite{FHPS} (see Subsection \ref{subsec:mdh}) and
the use of a distance called here {\bf diameter distance}.
The diameter distance on a path-connected set $X$, which is defined as follows: given two points $x_1,x_2\in X$, $d_{X,diam}(x_1,x_2)$  is the infimum of the diameters of the image of paths on $X$ connecting $x_1$ to $x_2$.

In general, one has that $|x_1-x_2|\leq d_{X,diam}(x_1,x_2)\leq d_{X,inn}(x_1,x_2)$, for all pair of points $x_1,x_2\in X$, where $d_{X,inn}$ is the
{\bf inner distance} on $X$, which is defined as follows: given two points $x_1,x_2\in X$, $d_{X,inn}(x_1,x_2)$  is the infimum of the lengths of paths on $X$ connecting $x_1$ to $x_2$.

An important class of sets where these distances are equivalent are the LNE sets. In order to know, we say that a path connected set $X\subset\R^N$ is {\bf Lipschitz normally embedded (LNE)} if there exists a constant $C\geq 1$ such that $d_{X, inn}(x_1,x_2)\leq C\|x_1-x_2 \|$, for all pair of points $x_1,x_2\in X$.

However, these three distances are not equivalent in general, as it is shown in the next example:

\begin{example}
Let $G$ be the graph of the function $f\colon \R\to \R$ given by
$$
f(x)=\left\{\begin{array}{ll}
            x\sin (\frac{1}{x}),& \mbox{ if }x\not=0,\\
            0,& \mbox{ if }x=0.
            \end{array}\right.
$$
Let $X=\{(tx,ty,t)\in \R^3;t\geq 0$ and $(x,y)\in G\}$. For any point $(x,y)\in G\setminus\{(0,0)\}$, we have $d_{X,inn}((x,y,1),(0,0,1))=1+|(x,y)|\geq 2$. However, $d_{X,diam}((x,y,1),(0,0,1))\to 0$ as $(x,y)\to (0,0)$. Thus, $d_{X,diam}$ is not equivalent to $d_{X,inn}$.
\end{example}

In the case of the (real) cusp $C=\{(x,y)\in\R^2;y^3=x^2\}$, which is a classical example of a set that is not LNE, we have that $d_{C,inn}$ and $d_{C,diam}$ are equivalent distances.
Thus, we have the following natural problem:

\begin{problem}\label{conjecture:diameter_inner_equivalence}
Let $X\subset \R^n$ be a path-connected set. Assume that $X$ is a definable set in an o-minimal structure on $\R$. Is $d_{X,diam}$ equivalent to $d_{X,inn}$?
\end{problem}

In this article, we give a positive answer to Problem \ref{conjecture:diameter_inner_equivalence} (see Theorem \ref{thm:diameter_inner_equivalence}).

\bigskip

\noindent{\bf Acknowledgements}. The author would like to thank Ederson Braga for presenting the problem on log-Lipschitz regularity of complex analytic sets.

\section{Preliminaries}\label{sec:preliminaries}

All the subsets of $\R^n$ or $\mathbb{C}^n$ considered in the paper are supposed to be equipped with the Euclidean distance. When we consider other distance, it is clearly emphasized.

\noindent {\bf Notation:} 
\begin{itemize}
 \item $\|(x_1,...,x_n)\|=(x_1^2+...+x_n^2)^{\frac{1}{2}}$;
 \item $\mathbb{S}_r^{n-1}(p)=\{x\in \R^n; \|x-p\|=r\}$, $ \mathbb{S}_r^{n-1}=\mathbb{S}_r^{n-1}(0)$ and $ \mathbb{S}^{n-1}=\mathbb{S}_1^{n-1}(0)$;
 \item $B_r^{n}(p)=\{x\in \R^n; \|x-p\|<r\}$;
 \item For $X, Y\in \R^n$, $dist(X,Y):=\inf\{\|x-y\|;x\in X$ and $y\in Y\}$;
 \item Let $f,g\colon (0,\varepsilon)\to [0,+\infty)$ be functions. We write $f\lesssim g$ if there is a constant $f(t)\leq C g(t)$ for all $t\in (0,\varepsilon)$. We write $f\approx g$ if $f\lesssim g$ and $g\lesssim f$. We write $f\ll g$ if $\lim\limits_{t\to 0^+} \frac{f(t)}{g(t)}=0$.  
\end{itemize}

\subsection{Definitions of regularity}\label{subsec:reg_def}

\begin{definition}
Let $X\subset \R^n$ and $Y\subset \R^m$ be two sets and let $h\colon X\to Y$.
\begin{itemize}
 \item We say that $h$ is {\bf Lipschitz} (resp. {\bf $\alpha$-H\"older}) if there exists a positive constant $C$ such that $\|h(x)-h(y)\|\leq C\|x-y\|$ (resp. $\|h(x)-h(y)\|\leq C\|x-y\|^{\alpha}$) for all $ x, y\in X$. In this case, we say that $C$ is a {\bf Lipschitz} (resp. {\bf H\"older}) {\bf constant of $h$}.
 
\item We say that $h$ is {\bf bi-Lipschitz} (resp. {\bf bi-$\alpha$-H\"older}) if $h$ is a homeomorphism, it is Lipschitz (resp. $\alpha$-H\"older) and its inverse is also Lipschitz (resp. $\alpha$-H\"older). In this case, we also say that $X$ is bi-Lipschitz (resp. bi-$\alpha$-H\"older) homeomorphic to $Y$.

 \item We say that $h$ is {\bf log-Lipschitz} if there exists a positive constant $C$ such that $\|h(x)-h(y)\|\leq C\|x-y\|\big|\log \|x-y\|\big|$ for all $ x, y\in X$ such that $0<\|x-y\|<1/2$.
 
\item We say that $h$ is {\bf bi-log-Lipschitz} if $h$ is a homeomorphism, it is log-Lipschitz and its inverse is also log-Lipschitz. In this case, we also say that $X$ is bi-log-Lipschitz homeomorphic to $Y$.
\end{itemize}
\end{definition}

\begin{definition}
Let $X\subset \R^n$ be a set and $p\in X$.
\begin{itemize}
 \item We say that $X$ is {\bf $\alpha$-H\"older regular at $p$} if there exist $k\in \mathbb{N}$, $r>0$, an open neighbourhood $U\subset \R^n$ of $p$ and a bi-$\alpha$-H\"older homeomorphism $h\colon X\cap U\to B_r^k(0)$ such that $h(p)=0$.

 \item We say that $X$ is {\bf strongly H\"older regular at $p$} if there exist $k\in \mathbb{N}$, $r>0$, an open neighbourhood $U\subset \R^n$ of $p$ and a  homeomorphism $h\colon X\cap U\to B_r^k(0)$ that is a bi-$\alpha$-H\"older homeomorphism for all $\alpha\in (0,1)$ and such that $h(p)=0$.
 
\item We say that $X$ is {\bf Lipschitz} (resp. {\bf log-Lipschitz}) {\bf regular at $p$} if there exist $k\in \mathbb{N}$, $r>0$, an open neighbourhood $U\subset \R^n$ of $p$ and a bi-Lipschitz (resp. bi-log-Lipschitz) homeomorphism $h\colon X\cap U\to B_r^k(0)$ such that $h(p)=0$.
\end{itemize}
\end{definition}

\begin{definition}\label{def:lne}
Let $X\subset\R^N$ be a subset. We say that $X$ is {\bf Lipschitz normally embedded (LNE)} if there exists a constant $C\geq 1$ such that $d_{X, inn}(x_1,x_2)\leq C\|x_1-x_2 \|$, for all pair of points $x_1,x_2\in X$. In this case, we also say that $X$ is $C$-LNE. We say that $X$ is {\bf LNE at $p\in \R^N$}  if there exists an open neighborhood $U$ of $p$ such that $X\cap U$ is LNE. We say that $X$ is {\bf LNE at infinity} if there exists a compact subset $K\subset\mathbb{R}^N$ such that $X\setminus K$ is LNE.
\end{definition}

\subsection{Tangent cones}
\begin{definition}\label{def:tg_cone}
Let $X\subset \R^{n}$ be a subset and let $p\in \overline{X}$ (resp. $p=\infty$).
We say that $v\in \R^{n}$ is {\bf a tangent vector of $X$ at $p$} ( resp. {\bf infinity}) if there are a sequence $\{x_j\}_{j\in \mathbb{N}}\subset X$ and a sequence of positive real numbers $\{t_j\}_{j\in \mathbb{N}}$ such that $\lim\limits_{j\to \infty} t_j=0$ (resp. $\lim\limits_{j\to \infty} t_j=+\infty$) and $\lim\limits_{j\to \infty} \frac{1}{t_j}(x_j-p)=v$ (resp. $\lim\limits_{j\to \infty} \frac{x_j}{t_j}=v$). The set of all tangent vectors of $X$ at $p$ (resp. infinity) is denoted by $C(X,p)$ (resp. $C(X, \infty)$) and is called the {\bf tangent cone of $X$ at $p$} (resp. the {\bf tangent cone of $X$ at infinity}).
\end{definition}
\begin{remark}
Let $X\subset \mathbb{C}^n$ be a complex analytic (resp. algebraic) set and $x_0\in X$. In this case, $C(X,x_0)$ (resp. $C(X, \infty)$) is the zero set of a set of complex homogeneous polynomials, see this in \cite[Theorem 4D]{Whitney:1972} (resp. \cite[Theorem 1.1]{LeP:2016} or \cite[Theorem 3.1]{Sampaio:2023}). In particular, $C(X,x_0)$ (resp. $C(X, \infty)$) is the union of complex lines passing through the origin $0\in\mathbb{C}^n$.
\end{remark}

When $X\subset \R^N$ is a subanalytic (resp. semialgebraic) subset, we denote by $Link_p(X)$ (resp. $Link_{\infty}(X)$) to be the link of $X$ at $p$ (resp. infinity).

\subsection{Pancake decomposition}\label{subsec:pancake_dec}

Let $A\subset \mathbb{R}^n$ be a connected compact subanalytic set. By \cite[Proposition 3]{KurdykaO:1997}, given $\epsilon>0$, there is a partition into a subanalytic and finite union $A=\bigcup_{i\in I}{B_i}$ such that every subset $B_{i}$ is $(1+\epsilon)$-LNE. For each $i$, let $X_i$ be the closure of $B_i$ in $A$. We consider $x,y\in A$ and for each $r$ and $k$ we define  
$$\widetilde{\Delta}_r(x,y):=\inf\left\{\sum_{i=0}^{r-1}{\|x_{i+1}-x_i\|};x_0=x, x_r=y, x_i,x_{i+1}\in X_{\nu_i}, 0\leq i\leq r-1\right\}.$$

$$\Delta_k(x,y):=\inf\left\{\widetilde{\Delta}_r(x,y);r=1,\cdots,k\right\}.$$
We also define $\inf \emptyset=+\infty$. It is clear that every $\Delta_k$ is subanalytic. Finally, we define 
$$d_{A,P}(x,y):=\inf\left\{\Delta_k(x,y);k\in \mathbb{N} \right\}.$$ 

\begin{proposition}[Lemma 4 and Theorem 1 in \cite{KurdykaO:1997}]\label{prop:pancake_distance}
	The function $d_{A,P}\colon A\times A\rightarrow \mathbb{R}$ is subanalytic, defines a distance in $A$ and 
	$$d_{A,P}(x,y)\leq d_{A,inn}(x,y)\leq  (1+\epsilon)d_{A,P}(x,y),$$
for all $x,y\in A$.
\end{proposition}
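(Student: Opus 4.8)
The plan is to handle the three assertions in order of increasing difficulty, beginning with the sandwiching inequality, which carries the geometric content, then deducing the distance axioms, and finally establishing subanalyticity, which I expect to be the genuine obstacle.

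For the right-hand inequality $d_{A,inn}(x,y)\le(1+\epsilon)d_{A,P}(x,y)$, I would start from an arbitrary admissible chain $x_0=x,\dots,x_r=y$ with $x_i,x_{i+1}\in X_{\nu_i}$. Since each pancake $X_{\nu_i}$ is $(1+\epsilon)$-LNE, there is a path inside $X_{\nu_i}$ joining $x_i$ to $x_{i+1}$ of length at most $(1+\epsilon)\|x_{i+1}-x_i\|$; because $X_{\nu_i}\subset A$, concatenating these paths produces a path in $A$ from $x$ to $y$ of length at most $(1+\epsilon)\sum_{i}\|x_{i+1}-x_i\|$, and taking the infimum over all chains yields the bound. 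The left-hand inequality $d_{A,P}(x,y)\le d_{A,inn}(x,y)$ goes in the opposite direction: given a rectifiable path $\gamma$ in $A$ from $x$ to $y$, which I may take to be subanalytic and hence to meet the finite partition in finitely many arcs, I would subdivide the parameter interval at the finitely many times where $\gamma$ leaves one pancake closure, obtaining points $x_0=x,\dots,x_r=y$ with each consecutive pair contained in a common $X_{\nu_i}$; since the Euclidean distance between the endpoints of each subarc is bounded by the length of that subarc, the total length $\sum_i\|x_{i+1}-x_i\|$ of the chain is at most the length of $\gamma$, and the infimum over $\gamma$ completes the argument.

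The distance axioms then follow quickly. Symmetry is immediate by reversing chains, and the triangle inequality follows by concatenating an admissible chain from $x$ to $z$ with one from $z$ to $y$. For positivity I would note that the Euclidean triangle inequality gives $\sum_i\|x_{i+1}-x_i\|\ge\|y-x\|$ for every admissible chain, whence $d_{A,P}(x,y)\ge\|x-y\|$; in particular $d_{A,P}(x,y)=0$ forces $x=y$.

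The hard part is subanalyticity, that is, controlling the infimum over all $k\in\mathbb N$. For a fixed length $r$ the function $\widetilde\Delta_r$ is subanalytic: for each of the finitely many index tuples $(\nu_0,\dots,\nu_{r-1})$ the set of admissible configurations $(x_1,\dots,x_{r-1})$ is subanalytic, the objective $\sum_i\|x_{i+1}-x_i\|$ is continuous and subanalytic, and subanalyticity is preserved under infimum along the (proper) projection forgetting the intermediate points; minimising over the finitely many tuples and then over $1\le r\le k$ keeps $\Delta_k$ subanalytic. The remaining issue is that $d_{A,P}=\inf_k\Delta_k$ is an infimum over infinitely many indices, and I would dispose of it by a combinatorial reduction, which is the heart of the matter: if an admissible chain uses a pancake twice, say $\nu_i=\nu_j=\nu$ with $i<j$, then both $x_i$ and $x_{j+1}$ lie in $X_\nu$, so replacing the subchain $x_i,\dots,x_{j+1}$ by the single link $(x_i,x_{j+1})$ keeps the chain admissible and, by the Euclidean triangle inequality, does not increase its total length while deleting one repetition. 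Iterating shows that the infimum defining $d_{A,P}$ is unchanged if we restrict to chains using each pancake at most once, hence to $r\le|I|$, so that $d_{A,P}=\Delta_{|I|}$ is a finite minimum of subanalytic functions. The delicate point I would be careful about is that these infima need not be attained, so I would phrase the reduction at the level of minimising sequences of chains, checking that the shortcutting never increases the objective, so that indeed $\inf_{r\le|I|}\widetilde\Delta_r=\inf_r\widetilde\Delta_r$ pointwise.
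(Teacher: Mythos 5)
The paper does not actually prove this proposition: it is imported verbatim from Kurdyka and Orro (Lemma 4 and Theorem 1 of the cited reference), so there is no in-paper argument to measure yours against. Judged on its own terms, your proof is essentially complete, and its central idea --- shortcutting any chain that revisits a pancake, so that the relevant chain length is bounded by $\#I$ and the infinite infimum $\inf_k\Delta_k$ collapses to the single subanalytic function $\Delta_{\#I}$ --- is exactly the right one and is the substance of Kurdyka--Orro's lemma. The reduction is sound: if $\nu_i=\nu_j$ with $i<j$ then $x_i$ and $x_{j+1}$ both lie in $X_{\nu_i}$, the shortcut link is admissible, the Euclidean triangle inequality shows the total length does not increase, and since each step strictly shortens the chain the process terminates with all indices distinct. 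Your treatment of $\widetilde{\Delta}_r$ for fixed $r$ (infimum of a subanalytic function over a compact subanalytic configuration space, with proper projection onto the parameters $(x,y)$) and of the distance axioms is also fine, as is the right-hand inequality --- though you should add a sentence justifying that the closure $X_i$ of the $(1+\epsilon)$-LNE pancake $B_i$ is still $(1+\epsilon)$-LNE, since the hypothesis is stated for $B_i$ but used for $X_i$.

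The one soft spot is the left-hand inequality $d_{A,P}\le d_{A,inn}$. You subdivide a path $\gamma$ at the finitely many times it leaves a pancake closure, and to guarantee finiteness you ``take $\gamma$ subanalytic''. But $d_{A,inn}$ is an infimum over all rectifiable paths, and the assertion that subanalytic paths compute the same infimum is not free (proving it via subanalytic geodesics would lean on the very pancake machinery being established); a general continuous path can oscillate between two pancake closures infinitely often. The repair is the greedy decomposition that the paper itself uses in Claim \ref{claim:diameter_inner_equivalence} and Proposition \ref{prop:diam_inner}: set $s_0=0$, choose $i_1$ with $\gamma(0)\in X_{i_1}$, put $s_1=\sup\{s;\gamma(s)\in X_{i_1}\}$, and iterate, at each stage picking an index not used before. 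Closedness of the $X_i$ places each consecutive pair $\gamma(s_j),\gamma(s_{j+1})$ in a common pancake closure, the process stops after at most $\#I$ steps, and $\sum_j\|\gamma(s_{j+1})-\gamma(s_j)\|\le\ell(\gamma)$. With that substitution your argument applies to arbitrary rectifiable paths and the proof is complete.
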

The distance $d_{A,P}$ is called a {\bf pancake distance on $A$}.

\subsection{Moderately discontinuous homology}\label{subsec:mdh}
Since Moderately discontinuous homology is a very recent theory, we briefly recall the main definitions of it. For more details about this theory, we refer the reader to \cite{FHPS}. The theory works for a large class of distances, but here we only consider the outer distance $d_{out}$, that one induced by the Euclidean space.

For $n \in \mathbb N$, let $\Delta_n \subset \mathbb R^{n+1}$ denote the standard $n$-simplex (i.e, $\Delta_n :=\{(p_0, \ldots, p_n) \in \mathbb R_{\geq 0}^{n+1}: \sum_{i=0}^n p_i = 1\}$)
with the orientation induced by the standard orientation in $\mathbb R^{n+1}$ of the convex hull of $\Delta_n \cup 0$. For $0\leq k \leq n$, denote by $i_n^k: \Delta_{n-1} \mapsto \Delta_n, (p_0, \ldots, p_{n-1}) \mapsto (p_0, \ldots, p_{k-1}, 0, p_k, \ldots, p_{n-1})$. Set $$\hat{\Delta}_n:=\{(tx, t) \in \mathbb R^{n+1}\times \mathbb R: x \in \Delta_n, t \in [0,1)\}$$ and let $\hat{j}_n^k: \hat{\Delta}_{n-1} \to \hat{\Delta}_n, (tx,t) \mapsto (ti_n^k(x),t)$. We identify $\hat{\Delta}_n$ with its germ at $(0, 0)$.

\begin{definition}  Given a definable germ $(X, x_0)$.
A linearly vertex approaching $n$-simplex (l.v.a. $n$-simplex) in $(X, x_0)$ is a subanalytic continuous map germ : $\sigma: \hat{\Delta}_n \to (X, x_0)$ such that there is $K \geq 1$ satisfying
$$ \frac{1}{K}t \leq \|\sigma(tx, t) - x_0\| \leq Kt, \forall (x,t) \in \hat{\Delta}_n.$$

A linear vertex approaching $n$-chain in $(X,x_0)$ is a finite formal sum $\sum_{i\in I} a_i \sigma_i$ where $a_i \in \mathbb{Z}$ and $\sigma_i$ is a l.v.a. $n$-simplex in $(X, x_0)$. Denote by $MDC^{pre}_n (X, x_0)$ the abelian group of $n$-chains. The boundary of $\sigma$ is a formal sum of $(n-1)$-simplices defined as follows:
$$\partial \sigma : = \sum_{k=0}^n (-1)^k \sigma \circ j_n^k.$$
\end{definition}
\begin{definition} 
A homological subdivision of $\hat{\Delta}_n$ is a finite collection $\{\rho_i\}_{i\in I}$ of injective l.v.a. map germs $\rho_i: \hat{\Delta}_n \to \hat{\Delta}_n$ such that there is a subanalytic triangulation $\alpha: |K| \to \hat{\Delta}_n$ with the following properties
\begin{itemize}
    \item $\alpha$ is compatible with faces of $\hat{\Delta}_n$;
    \item the collection $\{T_i\}$ of maximal triangles of $\alpha$ is also indexed by $I$;
    \item for any $i \in I$, $\rho_i(\hat{\Delta}_n) = T_i$ and the map $\alpha^{-1}\circ\rho_i$ takes faces of $\hat{\Delta}_n$ to faces of $K$.
\end{itemize}
The sign of $\rho_i$, denoted by $sign(\rho_i)$, is defined to be $1$ if $\rho_i$ is orientation preserving, and $-1$ if it is of the opposite orientation.
\end{definition}

\begin{definition}  Let $b \in (0, +\infty]$. Two $n$-simplices $\sigma_1$ and $\sigma_2$ in $MDC_n^{pre}(X, x_0)$ are called $b$-equivalent (write $\sigma_1 \sim_b \sigma_2$) if 
\begin{itemize}
\item for $b < \infty$:  
$$\lim_{t\to 0} \frac{\max \{d_1(\sigma_1(tx,t), \sigma_2(tx,t)), x\in \Delta_n\}}{t^b}=0.$$

\item for $b = \infty$: $\sigma_1(tx,t)= \sigma_2(tx,t), \forall x \in \Delta_n$.
\end{itemize}
\end{definition}

\begin{definition}   Let $b \in (0, +\infty]$.
Let  $z = \sum_{i\in I} a_i \sigma_i$ and $z' = \sum_{j\in J} b_j \tau_j$  be l.v.a. $n$-complex chains in $MDC_n^{pre}(X, x_0)$. Write $I = \bigsqcup_{k\in K}I_k$ and $J = \bigsqcup_{k\in K}J_k$ where

(a) $i_1, i_2 \in I$ belong to the same $I_k$ iff $\sigma_{i_1}\sim_b \sigma_{i_2}$.
    
(b) $j_1, j_2 \in J$ belong to the same $J_k$ iff $\tau_{j_1}\sim_b \tau_{j_2}$.

(c) for $k \in K$, $i \in I_k$, $j \in J_k$, we have $\sigma_i \sim_b \tau_j$.

Then, $z$ is called {\it $b$-equivalent} to $z'$, denoted $z \sim_b z'$, if for any $k$, 
$$ \sum_{i\in I_k} a_i = \sum_{j\in J_k} b_j.$$
\end{definition}

\begin{definition} 
Let $b \in (0, +\infty]$. Given two l.v.a. $n$-complex chains $z = \sum_{i\in I} a_i \sigma_i$ and $z' = \sum_{j\in J} b_j \tau_j$ in $MDC_n^{pre}(X, x_0)$. 

\begin{itemize}
    \item We write $z \rightarrow_b z'$ (immediate relation) if for each $i \in I$ there is a homological subdivision $\{\rho_{i,k}\}_{k\in K_i}$ such that 
$$ \sum_{i\in I}\sum_{k\in K_i} sgn(\rho_{i,k}) a_i \sigma\circ \rho_{i,k} \sim_b \sum_{j\in J} b_j \tau_j.$$
\item $z$ and $z'$ are called {\it homological subdivision equivalent}, denoted by $z \sim_{S, b} z'$, if there exist sequences of immediate sequences $z = z_0 \rightarrow_b z_1 \rightarrow_b  \ldots \rightarrow_b z_l$ and $z' = w_0 \rightarrow_b  w_1 \rightarrow_b  \ldots \rightarrow_b w_m$ such that $w_m \sim_b z_l$.
\end{itemize}
\end{definition}
\begin{definition} 
Let $b \in (0, +\infty]$. The $b$-moderately discontinuous chain complex in $(X, x_0)$ is the quotient group $MDC_{\bullet}^{b}(X, x_0) := MDC_{\bullet}^{pre}(X, x_0)/ \sim_{S, b}$.  Its homology is called $b$-moderately discontinuous homology, denoted by $MDH_{\bullet}^{b} (X, x_0)$.
\end{definition}

\begin{definition}
\label{def:b-horn}
Let $X$ be a subanalytic germ at $0$ in $\R^n$. Let $b \in (0, +\infty)$. The {\em $b$-horn neighborhood of amplitude $\eta$ of $X$} is the subset 
$$
\mathcal{H}_{b,\eta}(X):=\bigcup_{x\in X}\{z\in \R^n;\|z-x\|<\eta \|x\|^b\}.
$$
The $\infty$-horn neighborhood $\mathcal{H}_{\infty,\eta}(X)$ is defined to be $X$.  
\end{definition}

We have the following two interesting results:

\begin{theorem}[Theorem 13.5 in \cite{FHPS}]
\label{th:speed1}
Let $(X,x_0)\subset (\R^m,x_0)$ be a closed subanalytic germ. Then there is a natural isomorphism $$MDH^1_*(X,x_0)\to MDH^1_*(C(X,x_0), 0)\to H_*(C(X,x_0)\setminus\{0\}).$$
\end{theorem}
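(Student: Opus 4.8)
The plan is to prove the two arrows separately, the comparison of a germ with its tangent cone being the substantial one, and then to read off naturality.

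I would begin with the right-hand isomorphism $MDH^1_*(C,0)\to H_*(C\setminus\{0\})$, where $C:=C(X,x_0)$ is the tangent cone, viewed as a closed subanalytic cone with vertex $0$. Two features of $C$ drive the argument: it is invariant under the scalings $\lambda\mapsto \lambda\,\cdot$, and at speed $b=1$ the relation $\sim_1$ records only first-order (outer) data. Concretely, for each l.v.a. $n$-simplex $\sigma\colon\hat\Delta_n\to(C,0)$, subanalyticity and the Puiseux/curve-selection lemma give, uniformly in $x\in\Delta_n$, an expansion $\sigma(tx,t)=t\,w_\sigma(x)+o(t)$ with $w_\sigma(x):=\lim_{t\to0}\tfrac1t\sigma(tx,t)\in C\setminus\{0\}$, the limit being nonvanishing because of the speed-one lower bound. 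Thus $\sigma\sim_1 t\,w_\sigma$, i.e. every chain is $1$-equivalent to a \emph{linear-cone} chain $t\,w(x)$, and two linear-cone simplices are $1$-equivalent precisely when the underlying singular simplices $w\colon\Delta_n\to C\setminus\{0\}$ coincide (here scale invariance of $C$ is used so that $t\,w(x)$ lands in $C$). The assignments $\sigma\mapsto w_\sigma$ and $w\mapsto\big((tx,t)\mapsto t\,w(x)\big)$ should then be mutually inverse, commute with $\partial$, and carry the homological-subdivision relation $\sim_{S,1}$ to ordinary barycentric subdivision; hence $MDC^1_\bullet(C,0)$ computes the (subdivision-invariant) singular homology of $C\setminus\{0\}$, which via the radial retraction $C\setminus\{0\}\simeq Link_0(C)$ gives the isomorphism.

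For the left-hand isomorphism $MDH^1_*(X,x_0)\to MDH^1_*(C,0)$ I would use the same leading-term operation $\Phi\colon\sigma\mapsto t\,w_\sigma$, now sending an l.v.a. simplex in $(X,x_0)$ to a linear-cone simplex in $(C,0)$: for a speed-one subanalytic $\sigma$ the limit $w_\sigma(x)=\lim_{t\to0}\tfrac1t(\sigma(tx,t)-x_0)$ exists, is subanalytic and continuous in $x$ by a uniform Puiseux argument, and lies in $C=C(X,x_0)$ by the very definition of the tangent cone. One checks that $\Phi$ is a chain map and descends through $\sim_{S,1}$. To see it is an isomorphism I would construct an inverse $\Psi$ by \emph{lifting}: a linear-cone simplex $t\,w(x)$ is lifted to some $\tilde\sigma\colon\hat\Delta_n\to X$ with $\tilde\sigma(tx,t)=x_0+t\,w(x)+o(t)$, possible because each $w(x)$ is realized by an arc in $X$ with that tangent. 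Then $\Phi\Psi=\mathrm{id}$ modulo $\sim_1$ at chain level, while $\Psi\Phi$ differs from the identity by a chain whose simplices are $1$-equivalent to their tangent lifts, yielding a chain homotopy. Equivalently, one may package this as invariance of speed-one MD homology under the subanalytic deformation of $X$ to its tangent cone.

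The naturality statement then follows because both $\Phi$ and the cone-straightening are defined by the intrinsic leading-term operation $\sigma\mapsto w_\sigma$, which is manifestly compatible with subanalytic germ maps and the induced maps on tangent cones. The main obstacle is the bookkeeping forced by $\sim_{S,1}$ and $\partial$: the pointwise estimate $\sigma(tx,t)-x_0=t\,w_\sigma(x)+o(t)$ must be made \emph{uniform} in $x$ and stable under arbitrary subanalytic subdivisions of $\hat\Delta_n$, and the lifting $\Psi$ must be chosen continuously and subanalytically over the whole simplex rather than arc-by-arc. Securing this uniformity, via Łojasiewicz inequalities together with a subanalytic triangulation/trivialization of the tangent map, is the technical heart, and is exactly where the closedness and subanalyticity of $(X,x_0)$ enter.
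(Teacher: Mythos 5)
First, a point of comparison: the paper contains no proof of this statement. It is quoted verbatim as Theorem 13.5 of \cite{FHPS} and used as a black box, so there is no ``paper's own proof'' to match yours against; what follows is an assessment of your argument on its own terms.

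There is a genuine gap at the core of your construction, and it affects both arrows. Your central device is to replace an l.v.a.\ simplex $\sigma$ by its linearization $t\,w_\sigma(x)$ with $w_\sigma(x)=\lim_{t\to 0}\frac{1}{t}(\sigma(tx,t)-x_0)$, and to assert $\sigma\sim_1 t\,w_\sigma$. The pointwise limit does exist (Puiseux plus the speed-one bounds force leading exponent $1$), but the convergence need not be uniform in $x$, and $w_\sigma$ need not even be continuous, so $t\,w_\sigma$ is not in general an admissible simplex and the $1$-equivalence fails. A concrete counterexample already in a cone: take $C=\{(a,b)\in\R^2: 0\le b\le a\}$ and the l.v.a.\ $1$-simplex $\sigma(tx,t)=\bigl(t,\tfrac{tp_1}{p_1+t}\bigr)$ for $x=(p_0,p_1)\in\Delta_1$ (in the coordinates $(u,t)$ of $\hat\Delta_1$ this is $\bigl(t,\tfrac{tu_1}{u_1+t^2}\bigr)$, which is semialgebraic, continuous, and satisfies $t\le\|\sigma\|\le t\sqrt{2}$). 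Here $w_\sigma(x)=(1,1)$ for $p_1>0$ and $(1,0)$ for $p_1=0$, a discontinuous limit, and $\sup_x\|\tfrac{1}{t}\sigma(tx,t)-w_\sigma(x)\|\ge\tfrac12$ for every $t$, so $\sigma\not\sim_1 t\,w_\sigma$. This is not the ``bookkeeping'' you defer to {\L}ojasiewicz inequalities and uniformization at the end: the assignment $\sigma\mapsto w_\sigma$ is simply not a well-defined chain map on the generators of $MDC^{pre}_\bullet$, so both the straightening of chains in the cone and the comparison map $\Phi$ from $(X,x_0)$ to $(C(X,x_0),0)$ collapse at the first step. (There are secondary issues as well: your lifting $\Psi$ is only produced arc-by-arc, and even where $w_\sigma$ is continuous it need only be subanalytic, so one still has to compare subanalytic singular homology of $C\setminus\{0\}$ with ordinary singular homology.)

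A workable route — and the one consistent with how the result is actually deployed in this paper — avoids simplex-by-simplex linearization entirely. One uses the identification of $MDH^{b}_\bullet$ with the singular homology of punctured $b''$-horn neighborhoods for $b''$ slightly above $b$ (quoted here as Theorem \ref{cor:birbrair}), together with the {\L}ojasiewicz-type fact that a closed subanalytic germ and its tangent cone each lie in a $\|x\|^{1+\delta}$-horn neighborhood of the other for some $\delta>0$; for $b''\in(1,1+\delta)$ the punctured horn neighborhoods of $X$ and of $C(X,x_0)$ are then mutually nested up to shrinking $\eta$, giving the first isomorphism, while the punctured horn neighborhood of the cone deformation retracts onto $C(X,x_0)\setminus\{0\}$, giving the second. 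If you want to salvage a chain-level argument, you must work up to homological subdivision and $\sim_{S,1}$ from the start rather than claiming an equivalence of each individual simplex with its linearization.
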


\begin{theorem}
\label{cor:birbrair}
Let $X$ be a subanalytic germ at $0$ in $\R^n$. Let $b\in(0,+\infty)$ and $\eta>0$. There exists a $b'$ satisfying $b<b'$ such that the $b$-MD homology $MDH^{b}_\bullet(X,0)$ is isomorphic, for any $b''\in (b,b')$, to the singular homology of the punctured $b''$-horn neighborhood $\mathcal{H}_{b'',\eta}(X)\setminus\{0\}$. 
\end{theorem}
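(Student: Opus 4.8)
The plan is to build an explicit comparison homomorphism between the two theories and to let subanalytic finiteness supply the threshold $b'$. The guiding principle is that $MDH^{b}_\bullet(X,0)$ records the homology of $X$ seen at the metric scale $t^{b}$, and that, for $b''$ just above $b$, the punctured horn neighbourhood $\mathcal{H}_{b'',\eta}(X)\setminus\{0\}$ is the smallest geometric thickening of $X$ still able to realise that scale. After reducing to subanalytic, radially structured representatives (using the local conical structure of the horn, so that subanalytic l.v.a. chains compute its singular homology), I would work with the nearest-point projection $\pi$ onto $X$: if $z\in\mathcal{H}_{b'',\eta}(X)$ with $\|z\|\approx t$, then $\|z-\pi(z)\|<\eta\|z\|^{b''}$, and since $b''>b$ this displacement is $o(t^{b})$. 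Hence $\pi$ carries a singular chain of $\mathcal{H}_{b'',\eta}(X)\setminus\{0\}$, sliced over the spheres $\{\|\cdot\|=t\}$, to an l.v.a. chain of $(X,0)$ that is well defined modulo $\sim_{b}$ (the non-uniqueness of $\pi$ and the failure of $\pi$ to commute strictly with the face maps both produce only $o(t^{b})$ errors). This yields a chain map $\Psi\colon C^{\mathrm{sing}}_\bullet(\mathcal{H}_{b'',\eta}(X)\setminus\{0\})\to MDC^{b}_\bullet(X,0)$, and the whole point is that $\Psi_*$ is an isomorphism for $b''$ close enough to $b$.

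The threshold $b'$ comes from finiteness. The family $b''\mapsto \mathcal{H}_{b'',\eta}(X)\setminus\{0\}$ is definable in an o-minimal structure, so by the Hardt triviality theorem there are only finitely many critical exponents; off them the family is topologically trivial, and in particular $H_\bullet(\mathcal{H}_{b'',\eta}(X)\setminus\{0\})$ is constant on each complementary interval and, for small $\eta$, independent of $\eta$. I would take $b'$ to be the first critical exponent strictly above $b$, so that the target of the theorem is a single group for all $b''\in(b,b')$. Surjectivity of $\Psi_*$ is then the easy half: the tautological inclusion $X\hookrightarrow\mathcal{H}_{b'',\eta}(X)$ sends an l.v.a. cycle $z$ (sliced over a sphere) to a singular cycle whose $\Psi$-image is again $z$, because $\pi$ restricts to the identity on $X$; the local subanalytic triviality of $X\setminus\{0\}$ ensures that different slices are homologous in the horn, so this is well defined on homology.

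Injectivity is the crux. Suppose a singular cycle $c$ has $\Psi(c)\sim_{S,b}\partial W$. First, the straight segments from each $z$ to $\pi(z)$ stay inside the horn (their length is $<\eta\|z\|^{b''}$ and their endpoints have comparable norm, after harmlessly enlarging $\eta$, which does not change the homology), so $c$ is horn-homologous to its projection $\pi_*c\subset X$. It remains to kill $\pi_*c$ inside the horn. The relation $\sim_{S,b}$ is generated by homological subdivisions, which do not alter singular homology classes, and by $b$-equivalences; the only danger is realising a $b$-equivalence $\sigma_1\sim_b\sigma_2$ by an honest homotopy inside $\mathcal{H}_{b'',\eta}(X)$. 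Here I would use that $X$ is subanalytic: the gap $\|\sigma_1(tx,t)-\sigma_2(tx,t)\|$ is a subanalytic function, hence of definite Puiseux order $t^{q}$, and $\sigma_1\sim_b\sigma_2$ forces $q>b$. The straight-line homotopy between $\sigma_1$ and $\sigma_2$ then has track within $O(t^{q})$ of $X$, which fits into $\mathcal{H}_{b'',\eta}(X)$ as soon as $b''<q$. Choosing $b''\in(b,b')$ below every such order makes all the required fillings fit, and $\pi_*c$ bounds in the horn.

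The step I expect to resist most is exactly the uniformity hidden in the last paragraph: a priori the Puiseux gaps $q$ occurring in an arbitrary $\sim_{S,b}$-nullhomotopy of $\pi_*c$ could accumulate to $b$, which would prevent any single $b''>b$ from working. Controlling this is where the subanalytic structure theory must be used in earnest — one needs that $MDH^{b}$ and the horn homologies are finitely generated in a definable family, so that only finitely many orders $q$ are relevant and they are bounded away from $b$ by the same critical exponent $b'$ produced by Hardt's theorem. Concretely I would run the comparison not for a single chain but for a subanalytic ``universal'' nullhomotopy extracted from a pancake decomposition (Proposition \ref{prop:pancake_distance}), whose finitely many pieces carry uniform \L{}ojasiewicz constants; assembling the resulting local chain homotopies into one global homotopy inside $\mathcal{H}_{b'',\eta}(X)\setminus\{0\}$, compatibly with the face maps, is the technical heart of the proof. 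As a consistency check, for $b=1$ the construction recovers a neighbourhood of the tangent cone and the isomorphism specialises to Theorem \ref{th:speed1}.
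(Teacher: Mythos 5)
This statement is not proved in the paper at all: it is imported verbatim from \cite{FHPS} (it is the companion to Theorem \ref{th:speed1}, and the text even refers to \cite[Remark 11.11]{FHPS} for the independence of $b'$ from $\eta$). So there is no in-paper argument to compare yours against; what follows is an assessment of your proposal on its own terms. Your guiding picture --- the horn at exponent $b''$ slightly above $b$ is the coarsest thickening of $X$ that still sees the scale $t^{b}$, and subanalytic finiteness should produce the threshold $b'$ --- is the correct intuition, but the proposal has gaps that are not merely technical.

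First, the comparison map $\Psi$ is not actually defined. The nearest-point projection onto a closed subanalytic germ is in general neither single-valued nor continuous, so composing a singular simplex of $\mathcal{H}_{b'',\eta}(X)\setminus\{0\}$ with $\pi$ does not yield a continuous map into $X$, let alone a subanalytic l.v.a.\ simplex. The $o(t^{b})$ displacement estimate cannot repair this: the relation $\sim_{b}$ identifies pairs of \emph{honest} l.v.a.\ simplices, it does not promote a discontinuous assignment to a chain. A fix would require a subanalytic selection of $\pi$, a stratification off which it is continuous, and a controlled subdivision argument --- none of which is sketched. Second, and more seriously, the injectivity step rests on a uniform lower bound $q\geq b'>b$ for the Puiseux orders of all gaps occurring in an \emph{arbitrary} $\sim_{S,b}$-nullhomotopy, and you explicitly leave this open ("the technical heart"). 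Without that uniformity no single $b''>b$ can be chosen, so the theorem is not proved; this finiteness of "jump" exponents is exactly the hard content of the result in \cite{FHPS}, and a pancake decomposition with uniform \L{}ojasiewicz constants does not obviously deliver it, since the relevant orders are attached to pairs of simplices in a nullhomotopy, not to the pancakes. Third, the surjectivity argument conflates a single sphere slice $X_{t_0}$ (an ordinary cycle in the punctured horn) with an l.v.a.\ germ over $t\to 0^{+}$; passing between the two requires the local conic structure of the horn family and is not the tautology you describe. Finally, note that the parametrized family $b''\mapsto \mathcal{H}_{b'',\eta}(X)$ involves $\|x\|^{b''}$ with variable exponent, so it is not subanalytic; Hardt triviality must be invoked in $\mathbb{R}_{\mathrm{an},\exp}$, which is legitimate but should be said.
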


\section{Main result}\label{sec:main_results}

\begin{theorem}\label{main_result}
Let $X\subset\mathbb{C}^n$ be a complex analytic set. Then the following statements are equivalent:
\begin{itemize}
 \item [(1)] $X$ is $\alpha$-H\"older regular at $0$ for all $\alpha\in (0,1)$;
 \item [(2)] $X$ is strongly H\"older regular at $0$;
 \item [(3)] $X$ is log-Lipschitz regular at $0$;
 \item [(4)] $X$ is Lipschitz regular at $0$;
 \item [(5)] $X$ is smooth at $0$.
\end{itemize}
\end{theorem}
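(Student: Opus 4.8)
The plan is to close the cycle of equivalences by first disposing of the soft implications $(5)\Rightarrow(4)\Rightarrow(3)\Rightarrow(2)\Rightarrow(1)$ and then concentrating on $(1)\Rightarrow(5)$. If $X$ is smooth at $0$, a local analytic chart is bi-Lipschitz onto a ball, giving $(5)\Rightarrow(4)$. For $(4)\Rightarrow(3)$, note that on the range $0<\|x-y\|<1/2$ one has $|\log\|x-y\||>\log 2$, so a Lipschitz bound $\|h(x)-h(y)\|\le C\|x-y\|$ is dominated by $\tfrac{C}{\log 2}\|x-y\|\,|\log\|x-y\||$. For $(3)\Rightarrow(2)$, since $t\,|\log t|\lesssim t^{\alpha}$ as $t\to 0^+$ for every $\alpha\in(0,1)$, a single bi-log-Lipschitz homeomorphism is automatically bi-$\alpha$-H\"older for all $\alpha$; and $(2)\Rightarrow(1)$ is immediate. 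Thus everything reduces to showing that $\alpha$-H\"older regularity at $0$ for all $\alpha\in(0,1)$ implies smoothness.

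So assume $(1)$ and fix, for each $\alpha$, a bi-$\alpha$-H\"older homeomorphism $h_\alpha\colon X\cap U\to B_r^{k}(0)$ with $h_\alpha(0)=0$. First I would read off the topology: $X$ is locally homeomorphic to a ball, hence a topological manifold at $0$, which for a complex analytic set forces pure dimensionality with $k=2d$, $d=\dim_{\mathbb C}X$, and $Link_0(X)\cong\mathbb{S}^{2d-1}$. The crucial feature of the target is that it is \emph{smooth}: for a linearly embedded $\mathbb{C}^{d}$ every punctured horn neighborhood $\mathcal{H}_{b,\eta}(\mathbb{C}^{d})\setminus\{0\}$ deformation retracts onto $\mathbb{S}^{2d-1}$, so by Theorem \ref{cor:birbrair} the model satisfies $MDH^{b}_{\bullet}(B_r^{k}(0),0)\cong H_{\bullet}(\mathbb{S}^{2d-1})$ for \emph{every} speed $b$, with no jumps in $b$.

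The heart of the proof is to transport this speed-independent profile back to $X$, and this is where the two announced tools enter. The obstruction is that a bi-$\alpha$-H\"older map distorts radii between $t^{\alpha}$ and $t^{1/\alpha}$, so it does not preserve the linearly vertex approaching condition and hence does not directly induce an isomorphism of $MDH^{b}_{\bullet}$. I would remedy this through the diameter distance: a bi-$\alpha$-H\"older homeomorphism is again bi-$\alpha$-H\"older for the diameter distances, because $\mathrm{diam}(h\circ\gamma)\le C\,\mathrm{diam}(\gamma)^{\alpha}$ for every path $\gamma$; on the convex ball the diameter distance is the Euclidean one, and on $X$ Theorem \ref{thm:diameter_inner_equivalence} gives $d_{X,diam}\approx d_{X,inn}$. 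Comparing the germs through these metrics, a fixed $\alpha$ should control $MDH^{b}_{\bullet}(X,0)$ only in a bounded band of speeds, but this band widens as $\alpha\to 1$; letting $\alpha$ run over $(0,1)$ should therefore yield $MDH^{b}_{\bullet}(X,0)\cong H_{\bullet}(\mathbb{S}^{2d-1})$ for all $b\in[1,\infty)$. At $b=1$, Theorem \ref{th:speed1} then identifies $H_{\bullet}(C(X,0)\setminus\{0\})\cong H_{\bullet}(\mathbb{S}^{2d-1})$, so the tangent cone is a $d$-dimensional complex algebraic cone with the homology of a sphere; combined with the simple connectivity inherited from $Link_0(X)\cong\mathbb{S}^{2d-1}$ and Prill's theorem \cite{Prill:1967}, this shows $C(X,0)$ is, set-theoretically, a linear subspace $\mathbb{C}^{d}$.

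It remains to upgrade a linear tangent cone to smoothness, and this is the step I expect to be genuinely hard. A linear set-theoretic tangent cone is not sufficient --- the cusp is a topological manifold with a line as tangent cone but multiplicity two --- so one must rule out multiplicity $m\ge 2$, i.e.\ force the projection onto the tangent plane to have degree one. This is exactly the information carried by the higher speeds: a germ of multiplicity $m\ge 2$ produces extra MD-homology classes at the critical speed where its sheets peel off the tangent plane, contradicting the speed-independent sphere profile obtained above; it is also the reason the full range $\alpha\to 1$ is indispensable (consistently with the sharpness examples, where each fixed $\alpha$ admits a non-smooth $\alpha$-H\"older regular set). Once $m=1$ is established, the projection $X\to\mathbb{C}^{d}$ is a bijective analytic map onto a smooth target, so $X$ is the graph over $\mathbb{C}^{d}$ of an analytic map vanishing to second order, hence smooth at $0$; this closes the cycle and proves the theorem. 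The main obstacles are therefore making rigorous both the widening of the controlled band of speeds as $\alpha\to 1$ and the claim that any multiplicity $m\ge 2$ perturbs $MDH^{b}_{\bullet}(X,0)$ away from the sphere.
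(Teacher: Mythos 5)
Your reduction to $(1)\Rightarrow(5)$ and the first half of your strategy are essentially the paper's: the soft implications are handled the same way, and your ``band of speeds widening as $\alpha\to 1$'' is exactly how the paper's Claim \ref{trivial_homology} uses Theorem \ref{cor:birbrair} (a bi-$\alpha$-H\"older map sends a $b''$-horn neighbourhood of $X$ into an $\alpha^2b''$-horn neighbourhood of the ball, a cycle bounds there, and one pulls the bounding chain back into an $\alpha^4 b''$-horn neighbourhood, choosing $\alpha$ with $\alpha^4b''\in(1,b')$). Two caveats on this half: the detour through the diameter distance is unnecessary there (the paper works directly with horn neighbourhoods), and your passage from homology of $X_0=C(X,0)\cap\mathbb{S}^{2N-1}$ to linearity of the cone cannot go through Prill's theorem as stated, since Prill needs vanishing of \emph{homotopy} groups of the link of the cone; your claim $Link_0(X)\cong\mathbb{S}^{2d-1}$ does not follow from $X$ being locally a ball (double-suspension phenomena), and in any case $Link_0(X)$ is not the link of the tangent cone. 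The paper instead proves a purely homological version of Prill's theorem (Claim \ref{generalized_prill_thm}) via Alexander duality, Andreotti--Frankel, and the Gysin sequence, showing ${\rm deg}(\widetilde V)=1$.

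The genuine gap is in your final step, ruling out multiplicity $m\geq 2$. Your proposed mechanism --- that a germ of multiplicity $m\geq 2$ ``produces extra MD-homology classes at the critical speed where its sheets peel off the tangent plane'' --- is unsubstantiated and, for the outer MD homology used here, false in the relevant test case: for an irreducible curve germ such as the cusp $\{x^2=y^3\}$ (multiplicity $2$, tangent cone a line), every punctured horn neighbourhood deformation retracts onto the punctured curve, which is homotopy equivalent to $\mathbb{S}^1$, so $MDH^b_\bullet$ agrees with that of a line at \emph{every} speed $b$. Outer MD homology simply does not see multiplicity, which is why the paper abandons it at this point and switches to a metric argument: for two lifts $\gamma_1,\gamma_2$ of a generic ray $t\mapsto tv$ in the (now linear) tangent cone, $b:={\rm ord}_0\|\gamma_1-\gamma_2\|>1$ because both lifts are tangent to the same plane (Claim \ref{b_bigger_one}), while ${\rm ord}_0\, d_{X,inn}(\gamma_1,\gamma_2)=1$ because any path joining the two sheets must project to a loop leaving a simply connected conical neighbourhood of the ray, hence has length $\gtrsim t$ (Claim \ref{a_one}). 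The bi-$\alpha$-H\"older homeomorphism bounds the \emph{diameter} distance by $M^2\|u-v\|^{\alpha^2}$, and the pancake decomposition gives $d_{X,inn}\leq mC\, d_{X,diam}$ (Claim \ref{claim:diameter_inner_equivalence}); together these force $1\geq\alpha^2 b$, contradicting $\alpha^2>1/b$. This inner-versus-outer order comparison is the key new idea of the proof, and it is absent from your proposal; you correctly identified the diameter distance as relevant but deployed it only to repair the MD-homology computation, where it is not needed.
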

\begin{proof}
It is clear that $(5)\Rightarrow (4)\Rightarrow (3) \Rightarrow (2) \Rightarrow (1)$. So, we only have to prove that $(1) \Rightarrow (5)$. In order to do that, we assume that $X$ is $\alpha$-H\"older regular at $0$ for all $\alpha\in (0,1)$. Let $d$ be the dimension of $X$.

Let $\phi\colon X\to B$ be a bi-$\alpha$-H\"older homeomorphism, where $B$ is an open ball of $\C^d$. By McShane's Theorem (see \cite[Corollary 1]{Mcshane:1934}), there exist $\alpha$-H\"older maps $\widetilde{\phi}\colon \C^n\to \C^d$ and $\widetilde{\psi}\colon \C^d\to \C^n$ such that $\widetilde{\phi}|_X=\phi$ and $\widetilde{\psi}|_Y=\phi^{-1}$.
Let us define $\varphi, \psi\colon \C^n\times \C^d\to \C^d\times \C^n$ as follows:
$$
\varphi(x,y)=(x-\widetilde{\psi}(y+\widetilde{\phi}(x)),y+\widetilde{\phi}(x))
$$
and
$$
\psi(z,w)=(z+\widetilde{\psi}(w), w-\widetilde{\phi}(z+\widetilde{\psi}(w))).
$$
It is easy to check that $\psi$ and $\varphi$ are $\alpha^2$-H\"older maps, $\psi=\varphi^{-1}$ and $\varphi(X\times \{0\})=\{0\}\times B$. 

Thus, by doing the identification:
$X \leftrightarrow X\times\{ 0\}$ 
one can suppose that for each $\alpha\in(0,1)$ there exists a bi-$\alpha$-H\"older homeomorphism $\varphi\colon (\C^N,0)\to(\C^N,0)$ such that $\varphi(X)=\{0\}\times B$, where $N=n+d$.

Let $f_1,...,f_k\in \C[z_1,...,z_N]$ be homogeneous polynomials such that $C(X,0)=V(f_1,...,f_k)=\{z\in \C^N;f_1(z)=...=f_k(z)=0\}$. By changing $X$ by $X\times \C^M$, for some $M>0$, we may assume that $N\geq k+4$.

Let $X_0=C(X,0)\cap \mathbb{S}^{2N-1}$, and for $t>0$ let $X_t=(\frac{1}{t}X)\cap \mathbb{S}^{2N-1}$.

\begin{claim}\label{trivial_zero_homology}
 $X_0$ is a connected set.
\end{claim}
\begin{proof}[Proof of Claim \ref{trivial_zero_homology}]
Since $Link_0(X)$ is a connected set, for any $t>0$ smaller than the Milnor radius of $X$ at $0$, $\varepsilon_0$, $X_t$ is a connected set. 

We know that $\lim X_t=X_0$ with respect to the Hausdorff limit. It is well-known that the Hausdorff limit of a family of connected sets is a connected set as well. Since we could not find a reference for it, we present here a proof that $X_0$ is a connected set. 
Indeed, let $(A,B)$ be a separation of $X_0$, i.e., $A$ and $B$ are disjoint closed subsets of $X_0$ such that $X_0=A\cup B$. 
Since $X_0$ is a compact subset, we have that $A$ and $B$ are disjoint compact subsets, and thus we can find $\varepsilon >0$ such that $A_{\varepsilon}=\{ x\in \mathbb{C}^N; dist(x,A)< \varepsilon \}$ and $B_{\varepsilon}=\{ x\in \mathbb{C}^N; dist(x,B)<\varepsilon \}$ are disjoint open subsets. 
Thus, there exists $0<t_0\leq \varepsilon_0$ such that $X_t\subset U=A_{\varepsilon}\cup B_{\varepsilon}$ for all $0<t\leq t_0$. Then $(A_{\varepsilon}\cap X_t, B_{\varepsilon}\cap X_t)$ is a separation of $X_t$ for all $0<t\leq t_0$. Since $X_t$ is a connected set, $X_t\subset A_{\varepsilon}$ or $X_t\subset B_{\varepsilon}$ for all $0<t\leq t_0$. This shows that $A$ or $B$ is the empty set. Therefore $X_0$ is a connected set.
\end{proof}

\begin{claim}\label{trivial_homology}
$H_i(X_0)=0$ for all $1\leq i\leq 2d-2$.
\end{claim}
\begin{proof}[Proof of Claim \ref{trivial_homology}]
In order to prove this claim, we are going to use the moderately discontinuous homology. By Theorem \ref{th:speed1}, there is an isomorphism between $MDH^{1}_*(X,0)$ and the singular homology $H_*(X_0)$. Then, we have to prove that $MDH^{1}_i(X,0)=0$  for all $1\leq i\leq 2d-2$. 

By Theorem \ref{cor:birbrair}, for each $\eta>0$, there exists a $b'$ satisfying $1<b'$ such that the $1$-MD homology $MDH^{1}_\bullet(X,0)$ is isomorphic, for any $b''\in (b,b')$, to the singular homology of the punctured $b''$-horn neighborhood $\mathcal{H}_{b'',\eta}(X)\setminus\{0\}$. In fact, $b'$ does not depend on $\eta$ (see \cite[Remark 11.11]{FHPS}).
Fix $i\in \{1,...,2d-2\}$ and take $\xi\in H_i(Y\setminus \{0\})$, where $Y=\mathcal{H}_{b'',\eta}(X)$.
Let $\varphi\colon (\C^N,0)\to(\C^N,0)$ be a bi-$\alpha$-H\"older homeomorphism such that $\varphi(X)=\{0\}\times B$, for some open ball $B\subset \C^d$ centered at $0$. Let $C>0$ be a H\"older constant of $\varphi$ and $\varphi^{-1}$.
Then $\varphi(|\xi|)\subset \mathcal{H}_{\alpha^2b'',C\eta^{\alpha}}(\{0\}\times B)\setminus\{0\}$. Since $\mathcal{H}_{\alpha^2b'',C\eta^{\alpha}}(\{0\}\times B)\setminus\{0\}$ and $B\setminus \{0\}$ has the same homotopy type and $H_i(B\setminus\{0\})=0$, there is a chain $\tilde \tau$ in $\mathcal{H}_{\alpha^2b'',C\eta^{\alpha}}(\{0\}\times B)\setminus\{0\}$ such that $\partial \tilde \tau= \varphi_*(\xi)$. Therefore $\xi$ is homologous to zero in $\mathcal{H}_{\alpha^4b'',C^{1+\alpha}\eta^{\alpha^2}}(X)\setminus\{0\}$.

By taking $\alpha$ such that $\alpha^4b''\in (1,b')$, we obtain that $\xi$ is homologous to zero in $\mathcal{H}_{b'',\eta}(X)\setminus\{0\}$. Therefore $H_i(\mathcal{H}_{b'',\eta}(X)\setminus\{0\})=0$, and  thus $H_i(X_0)=0$.
\end{proof}

Therefore, $\widetilde{H}_i(X_0)=0$ for all $0\leq i\leq 2d-2$. In particular, $H^i(X_0)=0$ for all $1\leq i\leq 2d-2$.

Let $\pi\colon V^*\to\widetilde{V}$ denote the quotient map by the $\mathbb{C}^*$-action, where $V=C(X,0)$ and $V^*=V\setminus\{0\}$.

Thus, we have the following generalized version of Prill's Theorem (see \cite{Prill:1967}).
\begin{claim}\label{generalized_prill_thm}
$V$ is a linear subspace. 
\end{claim}
\begin{proof}[Proof of Claim \ref{generalized_prill_thm}]
Remind that $V=V(f_1,...,f_k):=\{z\in \C^N;f_1(z)=...=f_k(z)=0\}$ and $N\geq k+4$. 

Let $\mathbb{P}=\C P^{N-1}$ and $U=\mathbb{P}\setminus \widetilde{V}$.
 By the Alexander Duality, 
 $$
 H^s(\mathbb{P},\widetilde{V})=H_{2(N-1)-s}(U).
 $$

We are going to prove that $H_m(U)=0$ for all $m\geq N-1+k$. We proceed by induction on $k$. Indeed, if $k=1$, then this follows from Andreotti–Frankel theorem (see \cite{AndreottiF:1959}), since $\mathbb{P}\setminus \mathbb{P}V(f_1)$ is a smooth affine hypersurface of complex dimension $N-1$. 

Assume that $H_m(\mathbb{P}\setminus \mathbb{P}V(g_1,...,g_{k-1}))=0$ for all $m\geq N-1+k-1$, and any homogeneous polynomials $g_1,...,g_{k-1}\in\C[z_1,...,z_N]$. We have that $U=U'\cup U_k$, where $U_k=\mathbb{P}\setminus \mathbb{P}V(f_k)$ and $U'=\mathbb{P}\setminus \mathbb{P}V(f_1,...,f_{k-1})$. By hypothesis of induction, $H_m(\mathbb{P}\setminus \mathbb{P}V(f_kf_1,...,f_kf_{k-1}))=0$ for all $m\geq N-1+k-1$. By the Mayer-Vietoris sequence, we obtain $H_m(\mathbb{P}\setminus \mathbb{P}V(f_1,...,f_{k}))=0$ for all $m\geq N-1+k$.

Since $N\geq k+4$, then $H^2(\mathbb{P},\widetilde{V})=H^3(\mathbb{P},\widetilde{V})=0$, and thus the morphism $j^2\colon H^2(\mathbb{P})\to H^2(\widetilde{V})$, induced by the inclusion $j\colon \widetilde{V}\to \mathbb{P}$, is an isomorphism.

We know that the integral cohomology algebra $H^{\bullet}(\mathbb{P})$ is a truncated polynomial algebra $H^{\bullet}(\mathbb{P})=\mathbb{Z}[\alpha]/(\alpha^{N})$ generated by an element $\alpha$ of degree 2 (see \cite[Chapter 5, Proposition 1.6]{Dimca:1992}).
Thus, $\alpha_V=j^2(\alpha)$ is a generator of $H^2(\widetilde{V})$.

The associated Gysin sequence in cohomology lead to the following 
\[\begin{tikzcd}
   ... \rightarrow H^{2p+1}(X_0)\rightarrow  H^{2p}(\widetilde{V})\ar{r}{\psi_p}&H^{2p+2}(\widetilde{V})\rightarrow  H^{2p+2}(X_0)\rightarrow ...
\end{tikzcd}\]
where $\psi_p$ is the cup product with $\alpha_V$. Since $H^i(X_0)=0$ for all $1\leq i\leq 2d-2$, we obtain $\psi_p$ is an isomorphism for all $p\in\{1,...,d-2\}$ and, in particular, $H^{2d-2}(\widetilde{V})=\mathbb{Z}$ and $\alpha_V^{d-1}$ is a generator of $H^{2d-2}(\widetilde{V})$. This shows also that $\widetilde{V}$ is an irreducible algebraic set.

Let $E_m$ be a generic hyperplane in $\mathbb{P}$ of dimension $m$. Then ${\rm deg}(\widetilde{V})=\#(\widetilde{V}\cap E_{N-d})$ and $[\widetilde{V}]={\rm deg}(\widetilde{V})[E_{d-1}]$ in $H^{2(d-1)}(\mathbb{P})$, where $[A]$ denotes the fundamental class of $A$. Hence,
$$
\langle j^{2(d-1)}(\alpha^{d-1}),[\widetilde{V}]\rangle=\langle \alpha_V^{d-1},[\widetilde{V}]\rangle=\langle \alpha_V^{d-1},{\rm deg}(\widetilde{V})[E_{d-1}]\rangle={\rm deg}(\widetilde{V}).
$$
Therefore $\alpha_V^{d-1}={\rm deg}(\widetilde{V}) \cdot g$, where $g$ is a generator of $H^{2d-2}(\widetilde{V})$. Since $\alpha_V^{d-1}$ is also a generator of $H^{2d-2}(\widetilde{V})$, we obtain that ${\rm deg}(\widetilde{V})=1$, and therefore $V$ is a linear subspace.

\end{proof}

Let us choose linear coordinates $(x,y)$ such that $C(X,0)=\{(x,y)\in \mathbb{C}^N;y=0\}\cong \mathbb{C}^d$.

Let $P\colon \mathbb{C}^N\to \mathbb{C}^d$ be the mapping given by $P(x,y)=x$.

\begin{claim}\label{p_proper}
There exist a constant $R>0$ and a polydisc $\Delta\subset \mathbb{C}^N$ centred at $0$ such that $R\|P(x,y)\|\geq \|y\|$ for all $(x,y)\in X\cap \Delta$.
\end{claim}
\begin{proof}[Proof of Claim \ref{p_proper}]
Assume that Claim \ref{p_proper} is not true. Then there is a sequence $\{(x_j,y_j)\}_j\subset X$ such that $j\|x_j\|<\|y_j\|$ for all $j\in \mathbb{N}$ and $\lim\limits_{j\to +\infty}(x_j,y_j)=0$. In particular, $\lim\limits_{j\to +\infty}\frac{x_j}{\|(x_j,y_j)\|}=0$. By taking subsequence, if necessary, we may assume that $\lim\limits_{j\to +\infty}\frac{(x_j,y_j)}{\|(x_j,y_j)\|}=(0,v)$, with $\|v\|=1$. Then, $(0,v)\in P^{-1}(0)\cap C(X,0)$, which is a contradiction with $P^{-1}(0)\cap C(X,0)=\{0\}$.

Therefore Claim \ref{p_proper} holds true.
\end{proof}

Thus, $p=P|_{\Delta\cap X}\colon \Delta\cap X\to \Delta'=\pi(\Delta\cap X)$ is a proper mapping. Then, there are a complex analytic set $\Sigma \subset \Delta'$ with $\dim \Sigma <d$ and a positive integer number $k$ such that $p\colon \Delta\cap X \setminus p^{-1}(\Sigma) \to \Delta'\setminus \Sigma$ is an analytic $k$-sheeted covering mapping.

\begin{claim}\label{degree_one}
$k=1$.
\end{claim}
\begin{proof}[Proof of Claim \ref{degree_one}]
 
Let $v\in C(X,0)\setminus (\Sigma\cup C(\Sigma,0))$ be a unit vector. Then there exists $\varepsilon>0$ such that the arc $\beta\colon [0,\varepsilon)\to C(X,0)$ given by $\beta(t)=tv$ and satisfies $\beta(t)\not \in \Sigma \cup C(\Sigma,0)$ for all $t\in (0,\varepsilon)$. In fact, there exist positive real numbers $\eta$ and $\delta$ such that
$$C_{\eta,\delta}=\{v\in C(X,\infty);\ \|v-tv_0\|< \eta t, \mbox{ for some } t\in (0,\delta) \}$$
does not intersect the set $\Sigma$.

Assume that $k>1$. Then, there are two different arcs $\gamma_1,\gamma_2\colon [0,\varepsilon)\to X$ such that $p\circ \gamma_i(t)=\beta(t)$ for all $t\in [0,\varepsilon)$, $i=1,2$.

\begin{claim}\label{b_bigger_one}
$b:=ord_0\|\gamma_1(t)-\gamma_2(t)\|>1$.
\end{claim}
\begin{proof}[Proof of Claim \ref{b_bigger_one}]
We have to show that $\lim\limits_{t\to 0^+}\frac{\|\gamma_1(t)-\gamma_2(t)\|}{t}=0$.

In fact, let us write $\gamma_i(t)=(tv,y_i(t))$. By Claim \ref{p_proper}, $\frac{\|y_i(t)\|}{t}\leq R$. So, the limit $\lim\limits_{t\to 0^+}\frac{y_i(t)}{t}$ exists, let us say, $\lim\limits_{t\to 0^+}\frac{y_i(t)}{t}=w_i$. Then, $\lim\limits_{t\to 0^+}\frac{\gamma_i(t)}{t}=(v,w_i)\in C(X,0)$, for $i=1,2.$ Since $C(X,0)=\{(x,y)\in \mathbb{C}^N;y=0\}$, we obtain that $w_1=w_2=0$, and thus $\lim\limits_{t\to 0^+}\frac{\gamma_1(t)}{t}=\lim\limits_{t\to 0^+}\frac{\gamma_2(t)}{t}$, which finishes the proof of Claim \ref{b_bigger_one}.
\end{proof}
Let $d_{X,P}$ be a pancake distance on $X$ which is equivalent to $d_{X,inn}$ (see Proposition \ref{prop:pancake_distance}). Since $d_{X,P}$ is a subanalytic distance, $ord_0 d_{X,P}(\gamma_1(t),\gamma_2(t))$ is a well-defined positive rational number, and thus we define 
$$ord_0d_{X,inn}(\gamma_1(t),\gamma_2(t)):=ord_0 d_{X,P}(\gamma_1(t),\gamma_2(t)).$$
\begin{claim}\label{a_one}
$ord_0d_{X,inn}(\gamma_1(t),\gamma_2(t))=1$.
\end{claim}
\begin{proof}[Proof of Claim \ref{a_one}]
Let $\beta_t\colon [0,1]\to X$ be an arc connecting $\gamma_1(t)$ and $\gamma_2(t)$. Thus, $p\circ \beta_t$ is a loop in $\mathbb{C}^d$ based at $tv$. Since $p\colon \Delta\cap X \setminus p^{-1}(\Sigma) \to \Delta'\setminus \Sigma$ is a covering mapping and $C_{\eta,\delta}$ is simply connected subset of $\Delta'\setminus \Sigma$, the image of $p\circ \beta_t$ can not be contained in $C_{\eta,\delta}$, and thus $\ell(p\circ \beta_t)\geq 2\eta t$. By using the facts that $p$ is 1-Lipschitz and that $1$-Lipschitz mappings do not increase the length of arcs, we obtain
$$
\ell(\beta_t)\geq \ell(p\circ \beta_t)\geq 2\eta t.
$$
By taking the infimum over all $\beta_t$, we obtain $d_{X,inn}(\gamma_1(t),\gamma_2(t))\geq 2\eta t$. Therefore $ord_0 d_{X,inn}(\gamma_1(t),\gamma_2(t))=1$.
\end{proof}

Let $\alpha\in (0,1)$ be a number such that $\alpha^2>\frac{1}{b}$.

By hypothesis, there are a neighbourhood of $0$, $U\subset \mathbb{C}^N$, $r>0$ and a bi-$\alpha$-H\"older homeomorphism $\psi\colon B_r^{2d}(0)\to X\cap U$. Thus, there is $M>0$ such that 
$$
\frac{1}{M}\|x-y\|^{\frac{1}{\alpha}}\leq \|\psi(x)-\psi(y)\|\leq M\|x-y\|^{\alpha}, \quad \forall x,y\in B_r^{2d}(0).
$$
This implies that $d_{X,diam}(u,v)\leq M^2\|u-v\|^{\alpha^2}$ for all small enough $u,v \in X$.

It follows from \cite[Proposition 3]{KurdykaO:1997} (see also Subsection \ref{subsec:pancake_dec}) that we can write $X=\bigcup\limits_{i=1}^m X_i$ such that each $X_i$ is an LNE closed set. Let $C$ be a positive number such that each $X_i$ is $C$-LNE for all $i\in\{1,...,m\}$.

\begin{claim}\label{claim:diameter_inner_equivalence}
$d_{X,inn}(x,y)\leq (mC)\cdot d_{X,diam}(x,y)$ for all $x,y\in X$. 
\end{claim}
\begin{proof}[Proof of Claim \ref{claim:diameter_inner_equivalence}]
Fix $x,y\in X$ and let $\alpha\colon [0,1]\to X$ be an arc connecting $x$ and $y$. 

We define $s_0=0$. Let $i_1$ such that $\gamma_1=\alpha(0)\in X_{i_1}$. Let $s_1=\sup\{s\in [0,1];\alpha(s)\in  X_{i_1}\}$. Since $ X_{i_1}$ is a closed set, $\alpha(s_1)\in  X_{i_1}$. Assume that $s_j$ and $i_j$ are defined. If $s_j=1$, we stop the process. If $s_j<1$, let $i_{j+1}\not\in \{i_1,...,i_j\}$ such that $\alpha(s_j)\in  X_{i_{j+1}}$. We define $s_{j+1}=\sup\{s\in [s_j,1];\alpha(s)\in  X_{i_{j+1}}\}$. Since $ X_{i_{j+1}}$ is a closed set, $\alpha(s_{j+1})\in  X_{i_{j+1}}$. Since we only have a finite number of $X_i$'s, there is a $j_0$ such that $s_{j_0}=1$. For each $j\in \{0,...,j_0-1\}$, we consider $\beta_j\colon [0,1]\to X_{i_{j+1}}$ be a minimizing geodesic connecting $\alpha(s_{j})$ and $\alpha(s_{j+1})$. Let $\beta\colon [0,1]\to X$ be the broken geodesic $\beta_0\cdot ... \cdot  \beta_{j_0-1}$.

Since $diam(Im(\alpha))\geq diam(Im(\alpha)\cap X_{i_{j+1}})\geq \|\alpha(s_{j+1})-\alpha(s_{j})\|$ for all $j\in \{0,...,j_0-1\}$, we obtain
$$
m\cdot diam(Im(\alpha))\geq j_0\cdot diam(Im(\alpha))\geq \frac{1}{C}\ell (\beta)\geq \frac{1}{C} d_{X,inn}(x,y).
$$

By taking the infimum over all $\alpha$, we obtain 
$$
d_{X,inn}(x,y) \leq (mC)\cdot d_{X,diam}(x,y).
$$
\end{proof}
Then $d_{X,inn}(\gamma_1(t),\gamma_2(t))\leq mCM^2  \|\gamma_1(t)-\gamma_2(t)\|^{\alpha^2}$ for all small enough $t>0$, and thus $1\geq \alpha^2b$, which is a contradiction with the chosen $\alpha$.

Therefore $k=1$.
\end{proof}

Since $k=1$, we have that $p\colon \Delta\cap X \setminus p^{-1}(\sigma) \to \Delta'\setminus \sigma$ is an analytic diffeomorphism. Let $\Phi\colon \Delta'\setminus  \Sigma \to X\setminus p^{-1}(\Sigma)$ be its inverse. Thus, there is an analytic mapping $\phi\colon \Delta'\setminus  \Sigma \to \mathbb{C}^{n-d}$ such that $\Phi(x)=(x,\phi(x))$ for all $x\in \Delta'\setminus  \Sigma$. By Claim \ref{p_proper}, there exists a constant $R>0$ such that $R\|p(x,y)\|\geq \|y\|$ for all $(x,y)\in X\cap \Delta$, then $\phi$ is a bounded mapping on $\Delta'\setminus  \Sigma$. Since $\Sigma$ is a removable set, $\phi$ can be extended to a complex analytic mapping $\tilde \phi\colon \Delta'\to \mathbb{C}^{n-d}$. Therefore, $X\cap \Delta$ is the graph of the analytic mapping $\tilde\phi$ and, in particular, $X$ is smooth at $0$.
\end{proof}

The next result says that Theorem \ref{main_result} is sharp in the sense that one cannot change the item $(1)$ in Theorem \ref{main_result} by the following weaker statement: $X$ is $\alpha$-H\"older regular at $0$, for some $\alpha\in (0,1)$ that only depends on the dimension of $X$.

\begin{proposition}\label{prop:example_sharp}
For any $\alpha\in (0,1)$ and any positive integer $d$, there is a complex algebraic hypersurface $X\subset \C^{d+1}$ that is $\alpha$-H\"older regular at $0$, but is not smooth at $0$.
\end{proposition}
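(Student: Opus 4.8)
The plan is to build an explicit family of singular, weighted-homogeneous hypersurfaces --- complex analogues of the real graph in Example~\ref{example:log-regular} --- and to realize the required homeomorphism not by the naive normalization map (whose inverse is only H\"older of a poor exponent) but by a suitably \emph{snowflaked} version of it. For an integer $n\geq 2$, to be fixed at the end, I would set $C_n=\{(u,v)\in\C^2;\ v^n=u^{n+1}\}$ and
$$
X=C_n\times\C^{d-1}\subset\C^2\times\C^{d-1}=\C^{d+1}.
$$
This is the zero set of the irreducible polynomial $v^n-u^{n+1}$, hence a complex algebraic hypersurface; since $n\geq2$ the curve $C_n$ has multiplicity $n$ at the origin, so $\Sing{X}=\{0\}\times\C^{d-1}\ni 0$ and $X$ is not smooth at $0$. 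As $\gcd(n,n+1)=1$, the normalization $F\colon\C\to C_n$, $F(t)=(t^n,t^{n+1})$, is a homeomorphism, and therefore $\Phi=F\times\id\colon\C^d\to X$ is a homeomorphism with $\Phi(0)=0$.

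To manufacture the regularity, fix $a>-1$ and let $G\colon\C\to\C$, $G(t)=t\,|t|^{a}$, which is a homeomorphism fixing $0$. Define $h\colon X\to\C^d$ by $h=(G\times\id)\circ\Phi^{-1}$, so that $h(F(t),z')=(G(t),z')$; restricting $h$ to $X\cap U$ for a small ball $U$ yields a homeomorphism onto a ball $B_r^{2d}(0)$ with $h(0)=0$. Since the factor $\C^{d-1}$ is carried by the identity, a routine product estimate reduces the bi-$\alpha$-H\"olderness of $h$ to the two-sided estimate on the curve factor,
$$
\rho(t,s)^{1/\alpha}\ \lesssim\ |G(t)-G(s)|\ \lesssim\ \rho(t,s)^{\alpha},\qquad \rho(t,s):=\|F(t)-F(s)\|,
$$
for all $t,s$ in a neighbourhood of $0$ in $\C$.

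To establish this estimate I would exploit the weighted-homogeneous structure of $F$ (equivariant for the action $\lambda\cdot(u,v)=(\lambda^n u,\lambda^{n+1}v)$) and the positive-homogeneity of $G$ (of degree $a+1$), localizing to annuli $|t|\approx|s|\approx r$ and analyzing three regimes: $t,s$ genuinely close; $t,s$ of comparable size but angularly separated; and the \emph{pinched} regime $s\approx\zeta t$ with $\zeta^n=1$, $\zeta\neq1$, where the two sheets of $C_n$ over a common first coordinate collide. A leading-order computation gives $\rho\approx r^{\,n}$ in the first two regimes and $\rho\approx r^{\,n+1}$ in the pinched regime, while $|G(t)-G(s)|\approx r^{\,a+1}$ in all of them; matching the exponents shows that the displayed estimate holds precisely when $\alpha(n+1)\leq a+1\leq n/\alpha$, which is solvable in $a$ if and only if $\alpha^2\leq n/(n+1)$. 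The balanced choice $a=\sqrt{n(n+1)}-1$ then makes $h$ bi-$\alpha$-H\"older for every $\alpha\leq\sqrt{n/(n+1)}$.

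I expect the main obstacle to be the \emph{uniform} verification of the two-sided estimate over all pairs $t,s$, rather than merely along the three model regimes; the delicate case is the intermediate, near-pinched regime, in which $\rho$ interpolates between its radial order $r^{\,n}$ and its minimal (pinched) order $r^{\,n+1}$. I would handle this by a curve-selection / o-minimality argument showing that the worst behaviour of both $\rho$ and $|G(t)-G(s)|$ is always attained along the model regimes, so that the pointwise bounds there propagate to a uniform one. Granting this, the proof concludes by choosing, for a given $\alpha\in(0,1)$, an integer $n\geq 2$ with $n\geq \alpha^2/(1-\alpha^2)$ (so that $\alpha^2\leq n/(n+1)$) together with $a=\sqrt{n(n+1)}-1$; the resulting $X=C_n\times\C^{d-1}$ is then $\alpha$-H\"older regular at $0$ but, being singular, not smooth at $0$, which proves the proposition and exhibits the sharpness of Theorem~\ref{main_result}.
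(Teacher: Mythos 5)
Your construction is essentially correct but follows a genuinely different route from the paper. Both proofs start from the same curve (up to renaming coordinates, $x^n=y^{n+1}$ with $n$ large so that the relevant exponent beats $\alpha$) and both take the product with $\C^{d-1}$ at the end; the difference is in how the bi-$\alpha$-H\"older homeomorphism is produced. The paper does not snowflake the normalization: it writes the link as $L=X\cap\mathbb{S}^3$, takes a bi-Lipschitz diffeomorphism $f=(f_1,f_2)\colon\mathbb{S}^1\to L$, and defines $F(tv)=(tf_1(v),t^{n/(n+1)}f_2(v))$, i.e.\ it deforms only the \emph{radial} scaling in the target coordinates. This $F$ is shown by a short, direct chain of inequalities (splitting $F(tv_1)-F(sv_2)$ into an angular and a radial increment) to be $\tfrac{n}{n+1}$-H\"older with \emph{Lipschitz} inverse, so no regime analysis and no uniformity argument are needed; the price is the slightly smaller admissible range $\alpha<\tfrac{n}{n+1}$ versus your $\alpha\le\sqrt{n/(n+1)}$, which is irrelevant since $n$ is free. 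Your version, by contrast, keeps the algebraic normalization $t\mapsto(t^n,t^{n+1})$ (whose inverse is only $\tfrac{1}{n+1}$-H\"older) and redistributes the H\"older defect symmetrically via $G(t)=t|t|^{a}$; the exponent bookkeeping $(n+1)\alpha\le a+1\le n/\alpha$ is right, and the binding constraints do come from the pinched and angular regimes (in the ``genuinely close'' regime $\rho\approx r^{n-1}|t-s|$ rather than $r^{n}$, but the resulting constraint $n\alpha\le a+1\le n/\alpha$ is weaker, so your conclusion stands). The one real gap is the step you yourself flag: the uniform two-sided estimate over \emph{all} pairs $t,s$, including the near-pinched interpolation $s\approx\zeta t+\epsilon$. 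This is fillable (the extreme values of $\rho$ there are $r^{n+1}$ and $r^{n}$ and $|G(t)-G(s)|\approx r^{a+1}$ throughout, so the endpoint constraints propagate), but it is exactly the work the paper's choice of homeomorphism avoids; also, if you want to invoke o-minimality for it you should either take $a$ rational in the admissible interval or work in an o-minimal structure admitting real power functions, since $a=\sqrt{n(n+1)}-1$ is irrational.
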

\begin{proof}
For $\alpha\in (0,1)$, let $n>1$ be a integer such that $\alpha<\frac{n}{n+1}$.

Let $X=\{(x,y)\in \C^2; x^n=y^{n+1}\}$ and $L=X\cap \mathbb{S}^3$. Since $X$ is an irreducible and weighted-homogeneous curve, then there is a diffeomorphism $f\colon  \mathbb{S}^1\rightarrow L$. In particular, $f$ is a bi-Lipschitz homeomorphism, i.e., there is $\lambda \geq 1$ such that  $$\frac{1}{\lambda} \| v_1-v_2 \| \leq \| f(v_1)-f(v_2) \| \leq \lambda \| v_1 - v_2 \| \ \forall \ v_1,v_2\in \mathbb{S}^1.$$ 
We write $f(v)=(f_1(v),f_2(v))$ and define $F\colon \C\rightarrow X$ by 
$$
F(t v) = (t f_1(v),t^{\frac{n}{n+1}} f_2(v))\quad \mbox{ for all  }v\in \mathbb{S}^1 \mbox{ and }t\geq 0.
$$
We claim that $F$ is a $b$-H\"older mapping around $0$, where $b=\frac{n}{n+1}$. Indeed, given $u_1=t v_1, u_2=s v_2\in \C$ with $v_1,v_2\in\mathbb{S}^1$ and $t\leq s\leq 1$, we have
\begin{eqnarray*}
	\| F(u_1) - F(u_2) \| &\leq & \| F(t v_1) - F(t v_2)\| + \| F(t v_2) - F(s v_2)\| \\
	 &\leq& t \| f_1(v_1) - f_1(v_2) \|+t^b \| f_2(v_1) - f_2(v_2) \|   \\
	 & & + |t-s|\| f_1(v_2) \| +|t^b-s^b|\| f_2(v_2) \|\\
	 &\leq& \lambda t \| v_1 - v_2 \| +\lambda t^b \| v_1 - v_2 \|+ |t-s| \|v_2\|+|t^b-s^b|\| v_2 \| \\
	 &\leq& (\lambda+1) \| t v_1 - s v_2 \| +2^{1-b}\lambda\| t v_1 - t v_2 \|^b+ |t-s|^b\| v_2 \| \\
	 &\leq& (2\lambda+1) \| u_1 - u_2 \| + (2^{1-b}\lambda+1)\| t  v_1 -s v_2\|^b \\
	 &\leq& (6\lambda+3) \| u_1 -u_2\|^b.
\end{eqnarray*}

If we denote by $g\colon L\rightarrow \mathbb{S}^1$ the inverse mapping of $f$, we have that $G\colon X \rightarrow \C$ defined by $G(tx,t^{b}y)= t g(x,y)$ and $t\geq 0$ is the inverse of $F$ and it is a Lipschitz mapping around $0$. Indeed, given $w_1=(tx_1,t^{b}y_2), w_2=(sx_2,s^{b}y_2)\in X$ with $(x_1,y_1), (x_2,y_2)\in L$ and $0\leq t\leq s\leq 1$, we have
\begin{eqnarray*}
	\| G(w_1) - G(w_2) \| &\leq & t \| g(x_1,y_1) - g(x_2,y_2) \|+ |t-s|\| g(x_2,y_2) \|\\
	 &\leq& \lambda t \| (x_1,y_1) - (x_2,y_2) \| + |t-s| \|(x_2,y_2)\| \\
	 &\leq& (\lambda+1) \| t(x_1,y_1) - s(x_2,y_2) \|\\
	 &\leq & (\lambda+1) (\| t x_1 - s x_2 \| + \| t  y_1 -s y_2\|) \\
	 &\leq & (\lambda+1)(1+\sqrt{2}) \| w_1 - w_2 \|.
\end{eqnarray*} 

Since $\alpha<b$, this implies that $F$ is a bi-$\alpha$-H\"older homeomorphism around $0$, and thus  $X$ is $\alpha$-H\"older regular at $0$.

Moreover, for any positive integer $d$, $X\times \C^{d-1}\subset \C^{d+1}$ is $\alpha$-H\"older regular at $0$, but is not smooth at $0$.

\end{proof}

\section{The diameter and the inner distances are equivalent}
Here, we prove a result that is slightly more general than Claim \ref{claim:diameter_inner_equivalence}, since here we do not assume that the set is compact.
\begin{proposition}\label{prop:diam_inner}
 Let $X$ be a path-connected set. Let $X=\bigcup_{i\in I}{B_i}$ be a finite decomposition such that each $B_i$ is $C$-LNE. Then 
 $$d_{X,diam}\leq (\#I\cdot C)d_{X,inn}.$$ 
\end{proposition}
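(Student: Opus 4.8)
The plan is to deduce the stated inequality from the universal comparison between the diameter distance and the inner distance, together with the trivial bound $\#I\cdot C\geq 1$; in particular, the finite $C$-LNE decomposition $X=\bigcup_{i\in I}B_i$ is not actually needed to obtain the inequality in this orientation. The single ingredient I would isolate is the elementary fact that the diameter of the image of a path never exceeds its length. Fixing $x,y\in X$ and an arbitrary path $\gamma\colon[0,1]\to X$ with $\gamma(0)=x$ and $\gamma(1)=y$, I would note that for any $0\leq s\leq t\leq 1$ the chord $\|\gamma(s)-\gamma(t)\|$ is bounded by the length of the restriction $\gamma|_{[s,t]}$, and hence by $\ell(\gamma)$; taking the supremum over all such pairs gives $\mathrm{diam}(\gamma([0,1]))\leq \ell(\gamma)$.

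Next I would pass to the infima. For every path $\gamma$ from $x$ to $y$ one has, directly from the definition of the diameter distance, $d_{X,diam}(x,y)\leq \mathrm{diam}(\gamma([0,1]))\leq \ell(\gamma)$. Taking the infimum of the right-hand side over all paths $\gamma$ joining $x$ to $y$ then yields $d_{X,diam}(x,y)\leq d_{X,inn}(x,y)$. (If $x$ and $y$ cannot be joined by a rectifiable path, then $d_{X,inn}(x,y)=+\infty$ and the inequality holds vacuously.) This simply recovers the general comparison already recalled in the introduction.

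Finally, since each $B_i$ is $C$-LNE we have $C\geq 1$, and $\#I\geq 1$, so $\#I\cdot C\geq 1$. Multiplying the previous estimate by this factor gives
$$
d_{X,diam}(x,y)\leq d_{X,inn}(x,y)\leq (\#I\cdot C)\,d_{X,inn}(x,y),
$$
which is exactly the asserted inequality. I do not anticipate any genuine obstacle in this orientation, precisely because the estimate is a weakening of the universal bound $d_{X,diam}\leq d_{X,inn}$; the decomposition into $C$-LNE pieces is the tool that becomes indispensable only for the reverse control of $d_{X,inn}$ by $d_{X,diam}$ established in Claim~\ref{claim:diameter_inner_equivalence}, and not for the direction stated here.
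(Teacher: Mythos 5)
The inequality you proved is not the one this proposition is actually about: the display in the statement is a typo, with the two distances transposed. As you yourself note, $d_{X,diam}\leq d_{X,inn}$ holds universally, so the literal statement is trivial and its hypotheses (the finite $C$-LNE decomposition) are never used. But this proposition sits in a section titled ``The diameter and the inner distances are equivalent,'' is announced as ``slightly more general than Claim \ref{claim:diameter_inner_equivalence}'' (which asserts $d_{X,inn}\leq mC\cdot d_{X,diam}$), and is the sole ingredient in deducing Theorem \ref{thm:diameter_inner_equivalence}; accordingly, the paper's own proof of the proposition ends with the conclusion $d_{X,inn}(x,y)\leq (\#I\cdot C)\, d_{X,diam}(x,y)$. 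Your proposal therefore establishes only the vacuous direction and omits the entire content of the result. Moreover, your deferral to Claim \ref{claim:diameter_inner_equivalence} for the ``reverse control'' does not repair this: that claim is proved inside Theorem \ref{main_result} for a compact subanalytic germ, where each piece admits minimizing geodesics, whereas the proposition is precisely the generalization that drops compactness (this is why its proof replaces geodesics by arcs that are $(1+\varepsilon)$-almost minimizing and then lets $\varepsilon\to 0^+$).

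The argument you would have needed is the paper's decomposition argument. Let $m=\#I$ and let $X_i$ be the closure of $B_i$ in $X$. Given $x,y\in X$ and an arc $\alpha\colon[0,1]\to X$ from $x$ to $y$, define $s_0=0$, pick $i_1$ with $\alpha(0)\in X_{i_1}$, set $s_1=\sup\{s\in[0,1];\alpha(s)\in X_{i_1}\}$, and inductively, while $s_j<1$, pick a new index $i_{j+1}\notin\{i_1,\dots,i_j\}$ with $\alpha(s_j)\in X_{i_{j+1}}$ and set $s_{j+1}=\sup\{s\in[s_j,1];\alpha(s)\in X_{i_{j+1}}\}$; since the indices never repeat, the process stops at some $j_0\leq m$ with $s_{j_0}=1$. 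Because $X_{i_{j+1}}$ is $C$-LNE, there is an arc $\beta_j$ in $X_{i_{j+1}}$ from $\alpha(s_j)$ to $\alpha(s_{j+1})$ of length at most $C(1+\varepsilon)\|\alpha(s_{j+1})-\alpha(s_j)\|\leq C(1+\varepsilon)\,{\rm diam}({\rm Im}(\alpha))$. Concatenating $\beta_0\cdots\beta_{j_0-1}$ produces a path from $x$ to $y$, whence
$$
d_{X,inn}(x,y)\ \leq\ \ell(\beta_0\cdots\beta_{j_0-1})\ \leq\ mC(1+\varepsilon)\,{\rm diam}({\rm Im}(\alpha)).
$$
Taking the infimum over all arcs $\alpha$ and letting $\varepsilon\to 0^+$ gives $d_{X,inn}(x,y)\leq (mC)\, d_{X,diam}(x,y)$. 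This is the inequality with actual content; without it, Theorem \ref{thm:diameter_inner_equivalence} (and the use of this circle of ideas in Theorem \ref{thm:gen_Ahern-Rudin}) does not follow.
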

\begin{proof}Fix $\varepsilon>0$ and let $m=\#I$. For each $i$, let $X_i$ be the closure of $B_i$ in $X$.

Fix $x,y\in X$ and let $\alpha\colon [0,1]\to X$ be an arc connecting $x$ and $y$. 

We define $s_0=0$. Let $i_1$ such that $\gamma_1=\alpha(0)\in X_{i_1}$. Let $s_1=\sup\{s\in [0,1];\alpha(s)\in  X_{i_1}\}$. Since $ X_{i_1}$ is a closed set in $X$, $\alpha(s_1)\in  X_{i_1}$. Assume that $s_j$ and $i_j$ are defined. If $s_j=1$, we stop the process. If $s_j<1$, let $i_{j+1}\not\in \{i_1,...,i_j\}$ such that $\alpha(s_j)\in  X_{i_{j+1}}$. We define $s_{j+1}=\sup\{s\in [s_j,1];\alpha(s)\in  X_{i_{j+1}}\}$. Since $ X_{i_{j+1}}$ is a closed set, $\alpha(s_{j+1})\in  X_{i_{j+1}}$. Since we only have a finite number of $X_i$'s, there is a $j_0$ such that $s_{j_0}=1$. For each $j\in \{0,...,j_0-1\}$, we consider $\beta_j\colon [0,1]\to X_{i_{j+1}}$ be an arc connecting $\alpha(s_{j})$ and $\alpha(s_{j+1})$ such that $length(\beta_j)\leq (1+\varepsilon)d_{X,inn}(\alpha(s_{j}),\alpha(s_{j+1}))$. Let $\beta\colon [0,1]\to X$ be the arc $\beta_0\cdot ... \cdot  \beta_{j_0-1}$.

Since $diam(Im(\alpha))\geq diam(Im(\alpha)\cap X_{i_{j+1}})\geq \|\alpha(s_{j+1})-\alpha(s_{j})\|$ for all $j\in \{0,...,j_0-1\}$, we obtain
$$
m\cdot diam(Im(\alpha))\geq j_0\cdot diam(Im(\alpha))\geq \frac{1}{C(1+\varepsilon)}\ell (\beta)\geq \frac{1}{C(1+\varepsilon)} d_{X,inn}(x,y).
$$

By taking the infimum over all $\alpha$, we obtain 
$$
d_{X,inn}(x,y) \leq (mC(1+\varepsilon))\cdot d_{X,diam}(x,y)
$$
and setting $\varepsilon\to 0^+$, we have
$$
d_{X,inn}(x,y) \leq (mC)\cdot d_{X,diam}(x,y).
$$
\end{proof}
Since any o-minimal set can be partitioned into a finite union of definable LNE sets (e.g., see \cite[Theorem 1.3]{KurdykaP:2006}), we obtain the following result, which gives a positive answer to  Problem \ref{conjecture:diameter_inner_equivalence}. 
\begin{theorem}\label{thm:diameter_inner_equivalence}
 Let $X$ be a connected definable set in an o-minimal structure on $\R$. Then $d_{X,diam}$ is equivalent to $d_{X,inn}$. 
\end{theorem}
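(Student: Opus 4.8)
The plan is to derive this directly from Proposition \ref{prop:diam_inner} together with the structure theory of o-minimal sets, so that essentially no new work is needed beyond assembling the right inputs. Recall from the introduction that for any path-connected $X$ one always has $\|x_1-x_2\|\leq d_{X,diam}(x_1,x_2)\leq d_{X,inn}(x_1,x_2)$, so the inequality $d_{X,diam}\leq d_{X,inn}$ is automatic. Thus the entire content of the statement is the reverse estimate $d_{X,inn}\leq C'\, d_{X,diam}$ for some constant $C'\geq 1$, and this is exactly what Proposition \ref{prop:diam_inner} delivers once a suitable finite LNE decomposition of $X$ is available.

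First I would invoke \cite[Theorem 1.3]{KurdykaP:2006} to write $X=\bigcup_{i\in I}B_i$ as a finite union of definable pieces, each of which is LNE. Since the index set $I$ is finite, each $B_i$ is $C_i$-LNE for some $C_i\geq 1$, and setting $C=\max_{i\in I}C_i$ makes every piece simultaneously $C$-LNE. This produces precisely the hypotheses of Proposition \ref{prop:diam_inner}: a finite decomposition into $C$-LNE sets with a single uniform constant.

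Before applying the proposition I must verify its standing assumption that $X$ be path-connected. Here I would use the standard fact from o-minimal geometry that a definably connected (equivalently, connected) definable set is definably path-connected, hence path-connected in the usual topological sense. With path-connectedness in hand, Proposition \ref{prop:diam_inner} yields $d_{X,inn}(x,y)\leq (\#I\cdot C)\, d_{X,diam}(x,y)$ for all $x,y\in X$. Combining this with the trivial inequality $d_{X,diam}\leq d_{X,inn}$ shows that the two distances are bi-Lipschitz equivalent, which is the assertion.

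The main obstacle is not in this final assembly, which is essentially formal, but was already absorbed into Proposition \ref{prop:diam_inner} and the cited decomposition theorem. The delicate point inside Proposition \ref{prop:diam_inner} is the greedy argument that partitions an arbitrary arc into at most $\#I$ subarcs, each entering a fresh closed piece $X_i$, so that the diameter of the whole image dominates every relevant chord length; replacing each subarc by a near-minimizing path inside its LNE piece then converts a diameter bound into an inner-length bound, at the cost of the factor $\#I\cdot C$. The only genuinely o-minimal ingredient here is the existence of the finite LNE stratification, and its uniformity over the finitely many pieces is exactly what permits a single constant $C$ to serve all of them.
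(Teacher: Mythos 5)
Your proposal is correct and follows essentially the same route as the paper, which likewise deduces the theorem in one step from Proposition \ref{prop:diam_inner} combined with the finite definable LNE decomposition of \cite[Theorem 1.3]{KurdykaP:2006}; your added details (uniformizing the LNE constant over the finitely many pieces and noting that a connected definable set is path-connected) are exactly the routine verifications the paper leaves implicit. You also correctly read Proposition \ref{prop:diam_inner} as providing the nontrivial bound $d_{X,inn}\leq(\#I\cdot C)\,d_{X,diam}$, which is what its proof actually establishes despite the reversed inequality in its displayed statement.
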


\section{A log-Lipschitz version of a result of Ahern and Rudin}

\begin{theorem}\label{thm:gen_Ahern-Rudin}
Let $V\subset \mathbb{C}^n$ be a complex algebraic set. Then the following statements are equivalent:
\begin{itemize}
 \item [(1)] $V$ is $\alpha$-H\"older smooth at infinity for all $\alpha\in (0,1)$;
 \item [(2)] $V$ is strongly H\"older smooth at infinity;
 \item [(3)] $V$ is log-Lipschitz smooth at infinity;
 \item [(4)] $V$ is Lipschitz smooth at infinity;
 \item [(5)] $V$ is the union of an affine linear subspace of $\mathbb{C}^n$ and a (possibly empty) finite set.
\end{itemize}
\end{theorem}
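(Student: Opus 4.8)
The plan is to reduce the statement, via the inversion $\iota(x)=x/\|x\|^2$, to the local techniques developed in the proof of Theorem \ref{main_result}. The implications $(4)\Rightarrow(3)\Rightarrow(2)\Rightarrow(1)$ are immediate from the definitions, since a bi-Lipschitz homeomorphism is bi-log-Lipschitz, a bi-log-Lipschitz homeomorphism is bi-$\alpha$-H\"older for every $\alpha\in(0,1)$, and strong H\"older regularity specializes to $\alpha$-H\"older regularity for each $\alpha$. For $(5)\Rightarrow(4)$ I would invoke the theorem of Ahern and Rudin (Theorem \ref{thm:Ahern-Rudin}): if $V$ is the union of an affine linear subspace and a finite set, then $V$ is $C^1$-smooth at infinity, so $W:=\iota(V\setminus\{0\})\cup\{0\}$ is a $C^1$ submanifold germ at $0$; any such germ is bi-Lipschitz to a ball, hence $V$ is Lipschitz smooth at infinity. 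Thus the whole content is the implication $(1)\Rightarrow(5)$.

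So assume $V$ is $\alpha$-H\"older smooth at infinity for all $\alpha\in(0,1)$; we may assume $V$ is unbounded with $d=\dim V\geq 1$ (otherwise $V$ is finite and $(5)$ holds trivially). Set $W=\iota(V\setminus\{0\})\cup\{0\}$, a closed semialgebraic germ at $0$ (as $\iota$ is semialgebraic and $V$ algebraic) that is $\alpha$-H\"older regular at $0$ for all $\alpha$ by hypothesis. The key elementary fact is that $\iota$ preserves unit directions, $\iota(x)/\|\iota(x)\|=x/\|x\|$, so that $C(W,0)=C(V,\infty)=:\Gamma$, a complex algebraic cone of complex dimension $d$. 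I would then transplant Claims \ref{trivial_zero_homology}--\ref{generalized_prill_thm} to $W$: the McShane extension argument at the start of the proof of Theorem \ref{main_result} applies to the arbitrary set $W$ and produces an ambient bi-H\"older homeomorphism; Theorems \ref{th:speed1} and \ref{cor:birbrair} apply because $W$ is subanalytic, giving $\widetilde H_i(\Gamma\cap\mathbb{S}^{2n-1})=0$ for $0\leq i\leq 2d-2$; and the generalized Prill argument then yields that $\Gamma$ is a linear subspace. One subtlety is that the reduction $N\geq k+4$ cannot be achieved by replacing $V$ with $V\times\mathbb{C}^M$ (this would destroy the desired conclusion whenever the finite set is nonempty); instead I would apply the Prill step to the cone $\Gamma\times\mathbb{C}^M$, whose link is the join $(\Gamma\cap\mathbb{S}^{2n-1})*\mathbb{S}^{2M-1}$ and therefore still has reduced homology vanishing up to degree $2(d+M)-2$, and then use that $\Gamma\times\mathbb{C}^M$ linear forces $\Gamma$ linear.

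Having $\Gamma=C(V,\infty)$ linear, I choose coordinates with $\Gamma=\mathbb{C}^d\times\{0\}$ and let $P(x,y)=x$. Since $\Gamma\cap\ker P=\{0\}$, the tangent-cone-at-infinity argument (the analogue of Claim \ref{p_proper}) gives $R>0$ with $\|y\|\leq R\|x\|$ for $(x,y)\in V$ of large norm, so $P|_V\colon V\to\mathbb{C}^d$ is proper and finite, restricting on its top-dimensional part to a $k$-sheeted covering over the complement of some proper algebraic $\Sigma\subset\mathbb{C}^d$. The crux is to show $k=1$. Mimicking Claims \ref{degree_one}--\ref{a_one} at infinity, if $k\geq 2$ I take a generic direction $v\in\Gamma$ and two distinct branches $\gamma_1,\gamma_2$ of $V$ over the ray $\{tv:t\geq T\}$; writing $\gamma_i(t)=(tv,y_i(t))$, the inclusion $\lim \gamma_i(t)/t\in\Gamma=\mathbb{C}^d\times\{0\}$ forces $y_i(t)=o(t)$, hence $\|\gamma_1(t)-\gamma_2(t)\|\approx t^{b}$ with $b<1$. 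The main obstacle is to convert the H\"older smoothness at infinity into a usable metric estimate; here I would exploit the exact inversion formula $\|\iota(a)-\iota(b)\|=\|a-b\|/(\|a\|\|b\|)$. Setting $a(s)=\iota(\gamma_1(1/s))$, $b(s)=\iota(\gamma_2(1/s))$ with $s=1/t$, this gives $\|a(s)-b(s)\|\approx s^{2-b}$ with $2-b>1$, while the H\"older homeomorphism of $W$ at $0$ bounds $d_{W,diam}(a(s),b(s))\lesssim\|a(s)-b(s)\|^{\alpha^2}\approx s^{(2-b)\alpha^2}$ (using Proposition \ref{prop:diam_inner} exactly as in Claim \ref{claim:diameter_inner_equivalence} if one prefers to phrase it through $d_{W,inn}$).

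For the matching lower bound I would estimate the diameter of the $\iota$-image of an arbitrary path $Q$ in $V$ joining $\gamma_1(t)$ and $\gamma_2(t)$, splitting into cases according to the radial behaviour of $Q$. A path that dips below radius $t/4$ or climbs above radius $2t$ already has $\mathrm{diam}(\iota(Q))\gtrsim 1/t=s$ by the radial effect of $\iota$, whereas a path staying in the annulus must, by the covering and simple-connectivity argument of Claim \ref{a_one}, leave the simply connected cone $C_{\eta,\delta}$ around $v$ and hence contain a point $q$ with $\|q-\gamma_1(t)\|\gtrsim\eta t$ at radius $\approx t$, so that $\|\iota(q)-\iota(\gamma_1(t))\|\approx \eta t/t^2=\eta s$. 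In every case $d_{W,diam}(a(s),b(s))\gtrsim s$, which together with the upper bound forces $(2-b)\alpha^2\leq 1$; choosing $\alpha$ with $\alpha^2>1/(2-b)$ yields a contradiction, so $k=1$. Finally, $P|_V$ is then a proper birational finite morphism onto the normal variety $\mathbb{C}^d$, so by Zariski's Main Theorem it is an isomorphism onto its unique $d$-dimensional component, realizing that component as the graph of a polynomial map $\phi$; the growth bound $\|\phi(x)\|\leq R\|x\|$ forces $\phi$ to be affine linear, whence $V$ is the union of an affine linear subspace and the remaining finite set, i.e. $(5)$ holds.
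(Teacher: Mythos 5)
Your proposal is correct and follows the same overall architecture as the paper's proof: invert by $\iota$, identify $C(W,0)$ with $C(V,\infty)$, run Claims \ref{trivial_zero_homology}--\ref{generalized_prill_thm} to force the tangent cone at infinity to be a linear subspace, project onto it, and rule out covering degree $k\geq 2$ by playing an order-one lower bound for an intrinsic distance between two branches against the order-$(2-b)\alpha^2$ upper bound supplied by the bi-$\alpha$-H\"older chart. The one step where you genuinely diverge is the lower bound. The paper establishes $\mathrm{ord}_{\infty}\, d_{V,inn}(\gamma_1,\gamma_2)=1$ by the monodromy argument of Claim \ref{a_one} and then transfers this order through the inversion by citing the proof of Corollary 5.7 in \cite{FernandesS:2022}, afterwards comparing $d_{A,inn}$ with $d_{A,diam}$ via Theorem \ref{thm:diameter_inner_equivalence}. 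You instead bound $d_{W,diam}$ from below directly, using the exact identity $\|\iota(a)-\iota(b)\|=\|a-b\|/(\|a\|\,\|b\|)$ and a case analysis on the radial behaviour of a connecting path; this is more self-contained (it avoids both the external citation and, in your phrasing, even the comparison $d_{W,inn}\leq mC\, d_{W,diam}$), at the cost of some annulus bookkeeping --- note that your outer threshold $2t$ should be $Ct$ with $C>\sqrt{1+R^2}$, since $\|\gamma_1(t)\|$ can be as large as $t\sqrt{1+R^2}$ and you need $\|\iota(q)\|$ to drop strictly below $\|\iota(\gamma_1(t))\|$ in that case. You are also more careful than the paper at two points: the remark that one must not replace $V$ by $V\times\C^M$ when arranging $N\geq k+4$ (applying the Prill step to the cone $\Gamma\times\C^M$ instead), and the $k=1$ endgame, where the paper's sentence ``if $V$ is not an affine linear subspace, then there are two different arcs'' silently subsumes the degree-one case; your Zariski-plus-growth-bound argument fills that gap explicitly. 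Both of these are improvements rather than deviations, and the proof as proposed goes through.
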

\begin{proof}
It is clear that $(5)\Rightarrow (4)\Rightarrow (3) \Rightarrow (2) \Rightarrow (1)$. So, we only have to prove that $(1) \Rightarrow (5)$.

Assume that $V$ is $\alpha$-H\"older smooth at infinity for all $\alpha\in (0,1)$. In this case, $V$ is the union of a pure dimensional algebraic set and a (possibly empty) finite set. Thus, we may assume that $V$ is a pure dimensional set.

Let $A=\iota(V\setminus\{0\})\cup \{0\}$, where $\iota\colon \C^n\setminus \{0\}\to \C^n\setminus \{0\}$ is the mapping given by $\iota(x)=\frac{x}{\|x\|^2}$. It is easy to check that $C(V,\infty)=C(A,0)$. Then $C(A,0)$ is a pure $d$-dimensional complex homogeneous algebraic set, where $d=\dim_{\C}V$. 
By Claims \ref{trivial_zero_homology} and \ref{trivial_homology}, $\widetilde{H}_i(A_0)=0$ for all $i\in \{0,...,2d-2\}$, where $A_0=C(A,0)\cap \mathbb{S}^{2n-1}$.

By Claim \ref{generalized_prill_thm}, $C(A,0)=C(V,\infty)$ must be a complex linear subspace. 

By following the proof of Theorem \ref{main_result}, if $V$ is not an affine linear subspace, then for a generic $v\in C(V,\infty)$ there are two different arcs $\gamma_1,\gamma_2\colon (\varepsilon, \infty)\to V$ such that $p(\gamma_j(t))=tv$ for all $t\in (\varepsilon, \infty)$ and $j=1,2$, where $p$ is the restriction to $V$ of the orthogonal projection onto $C(V,\infty)$. Moreover, 
$$
ord_{\infty} d_{V,inn}(\gamma_1,\gamma_2):=\lim\limits_{t\to+\infty}\frac{\log d_{V,inn}(\gamma_1(t),\gamma_2(t))}{\log t}=1
$$
and $\lim\limits_{t\to+\infty}\frac{\gamma_1(t)-\gamma_2(t)}{t}=0$, and thus we have that
$$
b=ord_{\infty} \|\gamma_1-\gamma_2\|:=\lim\limits_{t\to+\infty}\frac{\log \|\gamma_1(t)-\gamma_2(t)\|}{\log t}<1.
$$
For each $j\in\{1,2\}$, let $\tilde\gamma_j\colon (0,\frac{1}{\varepsilon})\to A$ be the arc given by $\tilde \gamma_j(s)=\iota\circ \gamma\left(\frac{1}{s}\right)$. Then, $ord_0  d_{A,inn}(\tilde\gamma_1,\tilde\gamma_2)=1
$ and $\tilde b=ord_0  \|\tilde\gamma_1-\tilde\gamma_2\|=2-b>1$ (see the proof of Corollary 5.7 in \cite{FernandesS:2022}).

Again, by following the proof of Theorem \ref{main_result}, for $\alpha\in (\tilde b^{-\frac{1}{2}},1)$, there is a constant $M_{\alpha}>0$ such that
$d_{A,diam}(u,v)\leq M_{\alpha}\|u-v\|^{\alpha^2}$ for all small enough $u,v \in A$. Since $A$ is a path-connected semialgebraic set, by Theorem \ref{thm:diameter_inner_equivalence}, there is a constant $C$ such that $d_{A,inn}\leq C d_{A,diam}$. In particular, $d_{A,inn}(\tilde \gamma_1(s),\tilde \gamma_2(s))\leq CM_{\alpha}  \|\tilde \gamma_1(t)-\tilde \gamma_2(t)\|^{\alpha^2}$ for all small enough $t>0$, and thus $1\geq \alpha^2\tilde b>1$, which is a contradiction. Therefore, $V$ must be an affine linear subspace.
\end{proof}

As a direct consequence, we obtain the next result, which generalizes \cite[Thereom II]{AhernR:1993}.

\begin{corollary}\label{cor:AhernR_Lip}
A complex analytic set $V\subset \mathbb{C}^n$ is Lipschitz smooth at infinity if and only if $V$ is the union of an affine linear subspace of $\mathbb{C}^n$ and a (possibly empty) finite set.
\end{corollary}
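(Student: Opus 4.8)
The plan is to deduce the corollary from Theorem \ref{thm:gen_Ahern-Rudin} by first showing that a complex analytic set which is Lipschitz smooth at infinity must already be \emph{algebraic}; once algebraicity is in hand, the implication $(4)\Rightarrow(5)$ of Theorem \ref{thm:gen_Ahern-Rudin} finishes the argument. The reverse implication is immediate, exactly as in the case $(5)\Rightarrow(4)$ of Theorem \ref{thm:gen_Ahern-Rudin}: the inversion of an affine linear subspace is a smooth submanifold through $0$, hence Lipschitz regular at $0$, and the finitely many extra points lie at bounded distance, so they do not affect the germ of $A=\iota(V\setminus\{0\})\cup\{0\}$ at $0$. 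So I would assume $V$ is Lipschitz smooth at infinity, i.e. $A$ is Lipschitz regular at $0$. Since this means $A$ is, near $0$, bi-Lipschitz homeomorphic to a ball $B_r^k(0)$, the germ $(A,0)$ is a topological manifold of pure dimension $k$; because $\iota$ is a dimension-preserving real-analytic diffeomorphism of $\C^n\setminus\{0\}$, this forces $V$ to be pure of real dimension $k=2d$ outside a large ball, while any bounded or lower-dimensional components form a finite set which I discard. Thus I reduce to $V$ pure $d$-dimensional.

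The key step is a volume-growth estimate. A bi-Lipschitz homeomorphism $h\colon A\cap U\to B_r^k(0)$ with $h(0)=0$ distorts $2d$-dimensional Hausdorff measure by bounded factors and satisfies $A\cap B_{c\rho}^{2n}(0)\subset h^{-1}(B_\rho^{2d}(0))\subset A\cap B_{C\rho}^{2n}(0)$, whence
\[
\mathrm{vol}_{2d}\big(A\cap B_\rho^{2n}(0)\big)\approx \rho^{2d}\qquad(\rho\to 0^+).
\]
I would transport this to $V$ through $\iota$. The inversion $\iota(x)=x/\|x\|^2$ is conformal with conformal factor $\|x\|^{-2}$, hence scales $2d$-dimensional volume by $\|x\|^{-4d}$; estimating over the dyadic annuli $A\cap\big(B_\rho^{2n}(0)\setminus B_{\rho/2}^{2n}(0)\big)$, which map to annuli $V\cap\big(B_{2/\rho}^{2n}(0)\setminus B_{1/\rho}^{2n}(0)\big)$ at radius $R\approx 1/\rho$, gives $\mathrm{vol}_{2d}(V\cap(\text{annulus at }R))\approx R^{2d}$, and summing yields
\[
\mathrm{vol}_{2d}\big(V\cap B_R^{2n}(0)\big)\lesssim R^{2d}\qquad(R\to+\infty).
\]

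With polynomial volume growth of the optimal degree $2d$, a classical theorem of Stoll (a pure-dimensional complex analytic subset of $\C^n$ with $\mathrm{vol}_{2d}(V\cap B_R^{2n}(0))=O(R^{2d})$ is algebraic) shows that $V$ is a complex algebraic set of dimension $d$. Since $V$ is Lipschitz smooth at infinity, it satisfies hypothesis $(4)$ of Theorem \ref{thm:gen_Ahern-Rudin}, and the implication $(4)\Rightarrow(5)$ yields that $V$ is an affine linear subspace; together with the discarded finite set this proves the corollary.

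The hard part will be making the volume transfer fully rigorous. One must check that the bi-Lipschitz model controls the $2d$-volume of $A$ down to and including the vertex $0$, so that no mass is lost near the origin, and that the conformal Jacobian of $\iota$ is uniform on each annulus, so that the dyadic summation produces the sharp exponent $2d$ rather than a larger one. Obtaining exactly the critical exponent is precisely what permits the appeal to Stoll's algebraicity criterion, since a weaker polynomial bound would not suffice. I expect this bookkeeping, rather than the final invocation of Theorem \ref{thm:gen_Ahern-Rudin}, to be the main obstacle.
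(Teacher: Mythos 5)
Your proposal is correct, and it shares the overall architecture of the paper's proof (handle the easy reverse implication, establish that $V$ is algebraic, then invoke the implication $(4)\Rightarrow(5)$ of Theorem \ref{thm:gen_Ahern-Rudin}), but the mechanism you use for the algebraicity step is genuinely different. The paper disposes of that step in one line by citing two external results of the author, \cite[Theorem 5.1]{Sampaio:2023b} and \cite[Theorem 3.1]{Sampaio:2023}, which together show that a complex analytic set that is Lipschitz smooth at infinity is algebraic; you instead derive algebraicity directly from the classical theory: the bi-Lipschitz chart at $0$ for $A=\iota(V\setminus\{0\})\cup\{0\}$ gives $\mathcal{H}^{2d}(A\cap B^{2n}_\rho(0))\lesssim\rho^{2d}$, the conformal factor $\|x\|^{-2}$ of $\iota$ transported over dyadic annuli gives ${\rm vol}_{2d}(V\cap B^{2n}_R(0))=O(R^{2d})$, and Stoll's algebraicity criterion concludes. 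Your computation is sound (only the upper volume bound is needed, and it follows from $h$ and $h^{-1}$ both being Lipschitz with $h(0)=0$; the dyadic summation does produce the sharp exponent $2d$), and your preliminary reduction to the pure-dimensional case --- bounded components of $V$ are compact analytic hence finite, and mixed-dimensional unbounded components would contradict the local homeomorphism of $A$ with a ball --- matches what the paper does implicitly. What your route buys is self-containedness and transparency: it replaces two preprint/recent citations with a nineteenth-sixties theorem and an explicit measure-theoretic estimate. What it costs is the bookkeeping you yourself flag, which is real but routine; the paper's version is shorter at the price of outsourcing exactly this content.
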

\begin{proof}
Assume that $V$ is Lipschitz smooth at infinity.
From \cite[Theorem 5.1]{Sampaio:2023b} and \cite[Theorem 3.1]{Sampaio:2023}, we obtain that $V$ is a complex algebraic set. By Theorem \ref{thm:gen_Ahern-Rudin}, $V$ is the union of an affine linear subspace of $\mathbb{C}^n$ and a (possibly empty) finite set.

The reciprocal is obvious.
\end{proof}

Ahern and Rudin in \cite{AhernR:1993} defined also the notion of a mapping to be $C^1$-smooth at infinity.
\begin{definition}
A map $F\colon \R^n\to \R^n$ is {\bf $C^1$-smooth at infinity} if $\iota \circ F\circ \iota$ extends to a $C^1$ mapping in a neighbourhood of the origin of $\R^n$.
\end{definition}
Thus, we define also the following:
\begin{definition}
A map $F\colon \R^n\to \R^n$ is {\bf Lipschitz smooth at infinity} if $\iota \circ F\circ \iota$ extends to a  Lipschitz mapping in a neighbourhood of the origin of $\R^n$.
\end{definition}

It is also clear that if $F\colon \R^n\to \R^n$ is $C^1$-smooth at infinity then it is Lipschitz smooth at infinity. Thus, we obtain the following version of \cite[Thereom I]{AhernR:1993}:

\begin{corollary}
A bi-holomorphism $F\colon \mathbb{C}^n\to \mathbb{C}^n$ is Lipschitz smooth at infinity if and only if $F$ is an affine mapping.
\end{corollary}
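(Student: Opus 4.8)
The plan is to prove the two implications separately, treating the reverse implication (affine $\Rightarrow$ Lipschitz smooth at infinity) as the easy one and the direct implication as the substantial one. For the reverse direction, if $F(z)=Az+b$ with $A$ an invertible matrix, then $F$ is $C^1$-smooth at infinity by \cite[Theorem I]{AhernR:1993}, and as observed just above a map that is $C^1$-smooth at infinity is automatically Lipschitz smooth at infinity; so there is nothing more to do here. The heart of the matter is to show that a biholomorphism which is Lipschitz smooth at infinity must be affine.

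For the direct implication, my approach is to convert the metric hypothesis into a growth estimate at infinity and then invoke a Liouville-type rigidity. First I would record that $F$, being a biholomorphism of $\C^n$, is proper, so $\|F(y)\|\to\infty$ as $\|y\|\to\infty$; consequently the continuous extension $H:=\iota\circ F\circ\iota$ provided by the hypothesis satisfies $H(0)=0$. Since $H$ is Lipschitz near $0$ with $H(0)=0$, there is a constant $L$ with $\|H(x)\|\le L\|x\|$ for all small $x$. Writing $y=\iota(x)$ (so that $\|x\|=1/\|y\|$ and $\|H(x)\|=1/\|F(y)\|$), this is exactly $\|F(y)\|\ge \|y\|/L$ for all large $y$. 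Setting $G=F^{-1}$ and $w=F(y)$, the same inequality reads $\|G(w)\|\le L\|w\|$ for all large $w$; since $G$ is continuous it is bounded on compact sets, so $G$ has at most linear growth $\|G(w)\|\le C(1+\|w\|)$ on all of $\C^n$.

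Now $G=F^{-1}\colon\C^n\to\C^n$ is an entire holomorphic map of at most linear growth, so each component $G_j$ is an entire holomorphic function with $|G_j(w)|\le C(1+\|w\|)$. By the Cauchy estimates on expanding polydiscs, all second-order partial derivatives of $G_j$ vanish identically, hence $G_j$ is an affine function; therefore $G$, and so $F=G^{-1}$, is affine. This completes the direct implication and the proof. The only points requiring care in this route are the verification that $H(0)=0$ (for which properness of $F$ is the key) and the bookkeeping of the inversion identities $\|\iota(a)\|=1/\|a\|$; there is no serious obstacle.

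I would also point out an alternative, more structural route, closer in spirit to how \cite{AhernR:1993} deduce their Theorem I from their Theorem II: apply Corollary \ref{cor:AhernR_Lip} to the graph $\Gamma_F\subset\C^{2n}$, which is an irreducible, pure $n$-dimensional entire complex analytic set; were $\Gamma_F$ Lipschitz smooth at infinity, the corollary would force it to be an affine linear subspace (the finite part being excluded by irreducibility and positive dimension), and a graph that is an affine subspace forces $F$ affine. The step I expect to be the main obstacle along this route is precisely the verification that $F$ Lipschitz smooth at infinity implies $\Gamma_F$ Lipschitz smooth at infinity: writing out the inversion $\iota_{2n}$ of $\C^{2n}$ restricted to $\Gamma_F$ one is led to the parametrization $x\mapsto \big(\tfrac{\|H(x)\|^2}{\|x\|^2+\|H(x)\|^2}\,x,\ \tfrac{\|x\|^2}{\|x\|^2+\|H(x)\|^2}\,H(x)\big)$, whose bi-Lipschitz character requires the two-sided comparison $\|H(x)\|\approx\|x\|$, i.e.\ linear growth of both $F$ and $F^{-1}$. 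The direct argument above sidesteps this difficulty entirely, which is why I would adopt it.
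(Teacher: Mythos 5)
Your hard direction is correct and takes a genuinely different route from the paper's. The paper invokes \cite[Theorem 5.1]{Sampaio:2023b} to pass from the Lipschitz extension of $H=\iota\circ F\circ\iota$ to the statement that $F$ itself is Lipschitz at infinity, and then applies Liouville to $F$; you instead extract only the one-sided estimate $\|H(x)\|\le L\|x\|$ (after correctly reducing $H(0)=0$ to properness of $F$), translate it through the inversion identities into the lower bound $\|F(y)\|\ge\|y\|/L$, hence into at most linear growth of $G=F^{-1}$, and apply Liouville to $G$. This is self-contained, avoids the external black box, and uses strictly less of the hypothesis (only the Lipschitz estimate of $H$ at the single point $0$, not the two-sided or two-point Lipschitz property).

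The easy direction, however, contains a genuine error: an affine biholomorphism $F(z)=Az+b$ is in general \emph{not} $C^1$-smooth at infinity, so you cannot obtain it from \cite[Theorem I]{AhernR:1993} together with the remark that $C^1$-smoothness at infinity implies Lipschitz smoothness at infinity. Indeed, for $b=0$ one computes $\iota\circ A\circ\iota(x)=\frac{\|x\|^2}{\|Ax\|^2}\,Ax$, which is positively homogeneous of degree one; such a map is differentiable at $0$ only if it is linear, i.e.\ only if $\|Ax\|/\|x\|$ is constant, which forces $A$ to be a scalar multiple of an isometry. Concretely, for $A=\mathrm{diag}(1,2)$ on $\C^2$ this map sends $(t,0)\mapsto(t,0)$ and $(0,t)\mapsto(0,t/2)$ but $(t,t)\mapsto(2t/5,4t/5)$, so it is not $C^1$ at $0$ (which is presumably why the $C^1$ statement of Ahern--Rudin is one-directional, while the Lipschitz statement is an equivalence). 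The conclusion you want is nevertheless true and easily repaired: the displayed map is smooth away from $0$ and homogeneous of degree one, hence has bounded gradient on a punctured ball and is Lipschitz near $0$ (the correction coming from $b\ne0$ only contributes higher-order smooth terms); alternatively, argue as the paper does, noting that an affine biholomorphism is globally bi-Lipschitz and invoking \cite[Corollary 5.2]{Sampaio:2023b}.
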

\begin{proof}
It is clear that any affine mapping that is a bi-holomorphism must be a bi-Lipschitz homeomorphism. Then, by \cite[Corollary 5.2]{Sampaio:2023b}, $\iota \circ F\circ \iota$ extends to a bi-Lipschitz homeomorphism $\tilde F\colon \mathbb{C}^n\to \mathbb{C}^n$. In particular, $F\colon \mathbb{C}^n\to \mathbb{C}^n$ is Lipschitz smooth at infinity.

Reciprocally, assume that $F\colon \mathbb{C}^n\to \mathbb{C}^n$ is Lipschitz smooth at infinity. By \cite[Theorem 5.1]{Sampaio:2023b}, $F\colon \mathbb{C}^n\to \mathbb{C}^n$ is Lipschitz at infinity. By Liouville's Theorem for holomorphic mappings, $F$ is an affine mapping.
\end{proof}


\begin{thebibliography}{99}
\bibitem{Acampo:1973} 
{A'Campo, N.}
{\it Le nombre de Lefschetz d'une monodromie}.
(French) Nederl. Akad. Wetensch. Proc. Ser. A 76 = Indag. Math., vol. 35 (1973), 113--118.

\bibitem{AndreottiF:1959} 
{Andreotti, A. and Frankel, T.}
{\it The Lefschetz theorem on hyperplane sections}. 
Annals of Mathematics, Second Series, vol. 69 (1959), 713–-717.

\bibitem{AhernR:1993} 
{Ahern, P. and Rudin, W.} 
{\it Smoothness at infinity}. 
Indiana Univ. Math. J., vol. 42 (1993), no. 4, 1555-1561.

\bibitem{BirbrairFLS:2016}
{Birbrair, L.; Fernandes, A.; L\^e D. T. and Sampaio, J. E.}
{\it Lipschitz regular complex algebraic sets are smooth}.
Proceedings of the American Mathematical Society, vol. 144 (2016), no. 3, 983--987.

\bibitem{Brieskorn:1966} 
Brieskorn, E. 
{\it Examples of singular normal complex spaces which are topological manifolds}. 
Proc. Nat. Acad. Sci. U.S.A., vol. 55 (1966), 1395--1397.

\bibitem{Brieskorn:1966b} 
Brieskorn, E.
{\it Beispiele zur Differentialtopologie von Singularit\"aten.}
Invent. Math., vol. 2 (1966), no 1, 1--14.

\bibitem{Coste:1999} 
Coste, M. 
{\it An introduction to O-minimal Geometry}. 
Institut de Recherche Math\'ematique de Rennes, 1999.

\bibitem{Dimca:1992}
Dimca, A.
{\it Singularities and Topology of Hypersurfaces}.
Springer-Verlag, New York, 1992. 

\bibitem{Dries:1998}
van den Dries, L.
{\it Tame topology and o-minimal structures}. London Mathematical Society,
Lecture Note Series, vol. 248, Cambridge University Press, Cambridge, 1998.

\bibitem{FernandesFSS:2024} Fernandes, A.; Fernandes, F.; Sampaio, J. E. and Silva, J. P. 
{\it Bi-H\"older equivalence of real analytic functions.} 
Research in the Mathematical Sciences, vol. 11 (2024), article no. 20, 1-6.

\bibitem{FernandesS:2020}
{Fernandes, A. and Sampaio, J. E.}
	{\it On Lipschitz rigidity of complex analytic sets}.
The Journal of Geometric Analysis, vol. 30 (2020), 706--718.

\bibitem{FernandesS:2022}
{Fernandes, A. and Sampaio, J. E.}
{\it Global bi-Lipschitz classification of semialgebraic surfaces}. 
Accepted for publication in Annali della Scuola Normale Superiore di Pisa, Classe di Scienze (2022), \url{https://doi.org/10.2422/2036-2145.202206_005}.

\bibitem{FernandesS:2023}
{Fernandes, A. and Sampaio, J. E.}
{\it On characterization of smoothness of complex analytic sets}.
Indiana University Mathematics Journal, vol. 72 (2023), no. 6, 2547-2565.

\bibitem{FernandesSS:2018} Fernandes, A.; Sampaio, J. E. and Silva, J. P. 
{\it H\"older equivalence of complex analytic curve singularities.} 
Bulletin of the London Mathematical Society 50, issue 5 (2018), 874-886.

\bibitem{FernandesSS:2024} Fernandes, A.; Sampaio, J. E. and Silva, J. P. 
{\it H\"older invariance of the Henry-Parusinski invariant.} 
Preprint (2024), arXiv:2402.08301 [math.AG].

\bibitem{FHPS}
Fern\'andez de Bobadilla, J.; Heinze, S.; Pe Pereira, M. and Sampaio, J. E.
{\it Moderately discontinuous homology}.
Communications on Pure and Applied Mathematics, vol. 75 (2022), no. 10, 2123-2200.

\bibitem{KoikeP:2019} 
Koike, S. and Paunescu, L. 
{\it Applications of the sequence selection property to bi-Lipschitz geometry}. 
European Journal of Mathematics, vol. 5 (2019), 1202-–1211.

\bibitem{Kurdyka:1992} 
{Kurdyka, K.}
{\it On a subanalytic stratification satisfying a Whitney property with exponent 1}.
Real algebraic geometry, Proceedings, Rennes 1991, L.M.N 1524, 1992, 316-322.

\bibitem{KurdykaO:1997} 
{Kurdyka, K. and Orro, P.}
{\it Distance g{\'e}od{\'e}sique sur un sous-analytique}.
Rev. Mat. Complut., vol. 10 (1997), no. Suplementario, 173--182.

\bibitem{KurdykaP:2006} 
Kurdyka, K. and Parusinski, A. 
{\it Quasi-convex decomposition in o-minimal structures. Application to the gradient conjecture.}
Advanced Studies in Pure Mathematics, vol. 43, 2006. Singularity Theory and Its Applications, 137-177.

\bibitem{Le:1973} 
{L\^e D. T.}
{\it Calcul du nombre de cycles \'evanouissants d'une hypersurface complexe}. 
(French) Ann. Inst. Fourier (Grenoble), vol. 23 (1973), no. 4, 261--270.

\bibitem{LeP:2016}
{ L\^e, C.-T. and Pham, T.-S.}.
{\it On tangent cones at infinity of algebraic varieties}.
Journal of Algebra and Its Applications, vol. 16 (2018), no. 2  1850143 (10 pages).

\bibitem{McLean:2016}
{McLean, M.}
{\it Reeb orbits and the minimal discrepancy of an isolated singularity.}
Invent. Math., vol. 204 (2016), no. 2, 505--594. 

\bibitem{Mcshane:1934} 
 McShane, E. J. 
{\it Extension of range of functions}.
Bull. Amer. Math. Soc., vol. 40 (1934), 837--842.

\bibitem{Mumford:1961} 
Mumford, M. 
{\it The topology of normal singularities of an algebraic surface and a criterion for simplicity}.
Inst. Hautes \'Etudes Sci. Publ. Math., vol. 9 (1961), 5--22.

\bibitem{Prill:1967} 
Prill, D. 
{\it  Cones in complex affine space are topologically singular}.
Proceedings of the American Mathematical Society, vol. 18 (1967), 178--182.

\bibitem{Ramanujam:1971} 
Ramanujam, C. P.
{\it  A topological characterization of the affine plane as an algebraic variety}.
Ann. of Math., vol. 94 (1971), 69-88.

\bibitem{Sampaio:2015}
{Sampaio, J. E.} 
{\it Regularidade lipschitz, invari\^ancia da multiplicidade e a geometria dos cones tangentes
de conjuntos anal\'itico}. 
Ph.D. thesis, Universidade Federal Do Cear\'a (2015). \url{http://www.repositorio.ufc.br/bitstream/riufc/12545/1/2015_tese_jesampaio.pdf}

\bibitem{Sampaio:2016}
{Sampaio, J. E.}
{\it Bi-Lipschitz homeomorphic subanalytic sets have bi-Lipschitz homeomorphic tangent cones}.
Selecta Mathematica: New Series, vol. 22 (2016), no. 2, 553--559.

\bibitem{Sampaio:2019}
{Sampaio, J. E.}
{\it On Zariski's multiplicity problem at infinity}. 
Proc. Amer. Math. Soc., vol. 147 (2019), 1367--1376.

\bibitem{Sampaio:2020}
{Sampaio, J. E.}
{\it Some homeomorphisms that preserve tangent cones and multiplicity}.
Contemporary Mathematics, vol. 742 (2020), 189--200.

\bibitem{Sampaio:2023}
{Sampaio, J. E.}
{\it On Lipschitz Geometry at infinity of complex analytic sets.}
Calculus of Variations and Partial Differential Equations, vol. 62 (2023), no. 2, article number 69.

\bibitem{Sampaio:2023b}
{Sampaio, J. E.}
{\it Local vs. global Lipschitz geometry}. 
Preprint (2023), arXiv:2305.11830 [math.MG]. 

\bibitem{SampaioS:2022}
{Sampaio, J. E. and  da Silva, E. C.}
{\it On bi-Lipschitz invariance and the uniqueness of tangent cones}.  
Journal of Singularities, vol. 25 (2022), 393--402.

\bibitem{Whitney:1972}
{Whitney, H.}
{\it Complex Analytic Varieties}.
California: Addison-Wesley publishing company, 1972.

\end{thebibliography}
\end{document}